\title{Splitting algorithm and normed convergence for drawing\\ the random Loewner curves}
\author{Jiaming Chen\thanks{chen.jiaming@cims.nyu.edu}\quad and\quad Vlad Margarint\thanks{vmargari@charlotte.edu}  }
\affil{$\prescript{*}{}{\text{Courant}}$ Institute of Mathematical Sciences, New York University\par $\prescript{\dagger}{}{\text{Department}}$ of Mathematics, University of North Carolina Charlotte }
\date{\today}
\numberwithin{equation}{section}
\titleformat{\subsection}[runin]
  {\normalfont\large\bfseries}{\thesubsection}{1em}{}
\numberwithin{equation}{section}
\newtheorem{theorem}{Theorem}[section]
\newtheorem{proposition}[theorem]{Proposition}
\theoremstyle{definition}
\newtheorem{definition}[theorem]{Definition}
\theoremstyle{remark}
\newcommand\normx[1]{\lVert#1\rVert}
\newcommand\normy[1]{\big\lVert#1\big\rVert}
\renewenvironment{proof}[1][\proofname]{%
  \par\pushQED{\qed}\normalfont%
  \topsep6\p@\@plus6\p@\relax
  \trivlist\item[\hskip\labelsep\bfseries#1\@addpunct{.}]%
  \ignorespaces
}{%
  \popQED\endtrivlist\@endpefalse
}
\begin{document}
\maketitle

\begin{abstract}
    Recent advances in Schramm–Loewner evolution have driven increasing interest in non-standard Loewner flows. In this work, we propose a novel splitting algorithm to simulate random Loewner curves with rigorous convergence analysis in sup-norm and $L^p$. The algorithm is further extended to explore fractional SLE, driven by fractional Brownian motion, and noise-reinforced SLE, incorporating the effect on long-term memory. These exploratory and numerical extensions enable theoretical predictions on fractal dimensions and other statistical phenomena, providing new insights into such dynamics and opening directions for future research.

\end{abstract}

    \tableofcontents


\section{Introduction}
    Motivated by the study of Charles Loewner \cite{Loewner} for the Bieberbach conjecture \cite{de Branges}, the Schramm--Loewner evolution (SLE) \cite{Schramm} was created to describe the scaling interface of planar models in statistical physics, organizing the traces into universality classes distinguished by one single parameter \cite{Cardy2}. The loop-erased random walk \cite{Lawler/Viklund}, e.g.~corresponds to SLE(2), the critical Ising model \cite{Chelkak/Duminil-Copin/Hongler/Kemppainen/Smirnov} to SLE(3), the Gaussian free field \cite{Schramm/Sheffield} to SLE(4) and uniform spanning tree \cite{Liu/Wu} to SLE(8). This varying parameter is classified by the strength of diffusivity of the Brownian motion that generates the SLE and is denoted by $\kappa$. Pointing to different lattice models, the behavior of SLE($\kappa$) significantly changes when $\kappa$ is located in different domains. Using a Bessel-type approach \cite{Rohde/Schramm}, one can show that SLE$(\kappa)$ is simple when $0<\kappa\leq4$, swallowing \cite{Lawler} when $4<\kappa<8$, transient and space-filling \cite{Lind/Rohde} when $\kappa\geq8$. Such drastically different behavior of SLE with different $\kappa$ is also seen from other observables: Left passage probability \cite{Najafi}, the Fokker--Plank equation \cite{Najafi2} and fractal dimension of the Loewner traces \cite{Beffara}.\par
    The development of SLE has thus made it desirable to simulate such random curves for better understanding their behaviors. As a first order stochastic ODE, the Loewner equation is naturally approximated by Euler's method, see illustrations e.g.~by Vincent Beffara \cite{Beffara2}. Such approximation is admittedly good but it does not show the generated Loewner curve rather than a neighborhood of it. A second method for drawing SLE was suggested by Marshall/Rohde \cite{Marshall/Rohde} via the classic zipper algorithm \cite{Kuhnau}. Given discretization of the driving force, the zipper algorithm builds conformal map that takes those discrete points to the trace of a Jordan curve, and whence simulates the Loewner flow \cite{Kennedy0}. Its Carathéodory convergence \cite{Lawler} and Cauchy transform of marginal measures \cite{Bauer} are known only as convergence of Loewner flows rather than curves. Viewed as compact sets in the upper half plane, the outputs of zipper algorithm converge to SLE under Hausdorff metric in the $\kappa\leq4$ simple phase. The sup-norm convergence of zipper algorithm was not known until \cite{Tran}. Nonetheless, although well approximating SLE, the curves drawn from this algorithm are piecewise smooth \cite{Kennedy00} and simple, leading inevitably to skepticism \cite[p. 602]{Tran} at their first appearance.\par
    In this work, we propose and study the splitting algorithm via the Ninomiya--Victoir scheme for drawing SLE. Inspired by rough path theory \cite{Friz/Victoir}, which views SDEs as functions that map Brownian motion to continuous fractal paths, the splitting algorithm decomposes the Loewner flow into iterated stochastic integrals of the driving force \cite{Foster/Reis/Strange}, obtaining relatively good approximation, see e.g.~Figure \ref{fig: box-counting and yardstick} or \cite{Foster}. We rigorously prove the pathwise sup-norm and $L^p$ convergences of the splitting output to SLE, guaranteeing the effectiveness of our algorithm. On the other hand, due to the iterated integral nature of the splitting algorithm, the exact rate of convergence should relate to the Lévy area \cite{Clark/Cameron} which is difficult and is beyond the scope of this paper. Furthermore, the splitting algorithm has also been manifested in various models such as the stochastic Landau--Lifshitz equation \cite{Ableidinger/Buckwar}, diffusion processes with Lipschitz drift \cite{Buckwar/Samson/Tamborrino/Tubikanec} and the stochastic oscillatory dynamics \cite{Buckwar/Tamborrino/Tubikanec} with hypoelliptic steps \cite{Petersen} and is geometrically ergodic \cite{Mattingly/Stuart/Higham}.\par
    To this recognition, we apply the splitting algorithm to non-standard Loewner evolutions which are much more general than SLE, and make observations on the statistical phenomena that are still open and very difficult to solve. Such notion of non-standard Loewner evolution has appeared on several fronts: Smooth criteria for traces driven by finite variation process are given in \cite{Shekhar/Tran/Wang}; Existence of continuous fractal curves is later extended to semimartingale driving force \cite{Margarint/Shekhar/Yuan} and further to complex-valued driving force \cite{Lind/Utley} with the aid of Itô's formula; When the Loewner flow is driven by Lévy processes, càdlàg traces were discussed in \cite{Peltola/Schreuder} with careful derivative estimates. Replacing standard Brownian motion by variant driving forces yields a very complicated multifractal structure: The traces can become highly fractal and they can be either tree-like, forest-like or looptree-like. The conformal invariance of SLE, which was vital to Brownian intersection exponents \cite{Duplantier/Kwon} and conformal field theory \cite{Ang/Remy/Sun}, is also destroyed in general. It is not at all clear how to rigorously proceed with their theoretical aspects - One has to to invent new tools such as the supermartingale technique in \cite[Lemma 5.6.8]{Cohen/Elliott}, \cite[Proposition 3.13]{Peltola/Schreuder}. It is then illuminating to use high order numerical methods, e.g.~the splitting algorithm, for accurate theoretical predictions.\par
    Beyond the above cádlág semimartingale driving functions, some recent advances on non-semimartingale driven SLE have made striking observations on both numerical and theoretical aspects. Motivated and borrowed techniques from SLE driven by Lévy flights \cite{Oikonomou/Rushkin/Gruzberg/Kadanoff}, the subordinated SLE was developed in \cite{Ghasemi Nezhadhaghighi/Rajabpour/Rouhani} by considering time as a monotonic stochastic parameter. The Loewner evolution driven by a superposition of Brownian motion and Lévy pure jump was mentioned in \cite{Rushkin/Oikonomou/Kadanoff/Gruzberg}, where the branching phenomenon and phase transition were first observed in \cite{Oikonomou/Rushkin/Gruzberg/Kadanoff} and later rigorously shown in \cite{Guan/Winkel}. Anisotropic SLE was investigated in \cite{Credidio/Moreira/Herrmann/Andrade} where the driving force obeys power-law in time, leading to an explicit characterization of fractional Brownian driving force \cite[Section II]{Tizdast/Ebadi/Cheraghalizadeh/Najafi/Andrade/Herrmann}. It is there significant that aptly modified deterministic drift in Loewner SDE yields scale-invariant Loewner flow, from where the generated Loewner curve has been numerically observed self-similar \cite[Section IV]{Tizdast/Ebadi/Cheraghalizadeh/Najafi/Andrade/Herrmann}, leading to distinguishable contrasts to \cite{Margarint/Shekhar/Yuan,Peltola/Schreuder,Shekhar/Tran/Wang} and pointing to theoretical new horizon with optimistic geometric predictions. In light of the still-existing computational uncertainties, we revisit this fractional SLE model in this work, and we use our splitting algorithm for drawing its random traces and analyzing its fractal dimension. Here we adopt the box-counting method instead of the yardstick method for computing the dimension of fractal curves, see Figure \ref{fig: box-counting and yardstick} for illustration. Not quite surprisingly, our drawings of fractional SLE, see Figure \ref{fig: Fractional SLE}, via splitting algorithm are almost the same as those in \cite{Tizdast/Ebadi/Cheraghalizadeh/Najafi/Andrade/Herrmann}: The curves maintain self-similar shape, get smoother at higher Hurst exponents, and more fractal as $\kappa$ increases. Furthermore, the numerical results of computing the fractal dimension also shows the same monotonicity trend as the $\kappa$ and Hurst exponent vary, further confirming the optimism of the conclusions in \cite[Section V]{Tizdast/Ebadi/Cheraghalizadeh/Najafi/Andrade/Herrmann}.\par
    Apart from the model of fractional SLE, we look ahead to formulate an interplay between non-standard SLE and stochastic reinforcement. This fascinating mechanism allows processes to have long-term memory and that their future dynamics is strongly affected by their past trajectories. The edge-reinforced random walk \cite{Merkl/Rolles}, for instance, describes a particle traversing on a graph where it is more likely to repeat the edges which has been visited before. Whether this particle is attracted to a single edge depends on the exact reinforcement strength \cite{Limic/Tarres}. Recent results have pointed out that such reinforced random walks can be decomposed into a mixture of the continuous-time vertex-reinforced jump process \cite{Sabot/Tarres} where the particle jumps according to an exponential law with reinforced weights in time. Novel variant models have also been proposed such as the trace-reinforced ants process \cite{Kious/Mailler/Schapira} where ants embark on a journey to search food from their nest. It has been shown that these ants will not find the shortest path \cite{Kious/Mailler/Schapira2} with some still left-open questions. See also \cite{Pemantle} for a great survey on various other reinforced models with self-interaction. Such reinforcement provides a new angle where stochastic processes undergo distortions by their memories of the past. Such memory need not be only on their past trajectories in state spaces, but could also be less tangible such as the while noise in the past \cite{Bertoin0,Bertoin2}.\par
    In this direction, Jean Bertoin \cite{Bertoin} innovatively designed the so called noise-reinforced Brownian motion whose fluctuation is repeated in time and corresponds to the scaling limit of some step-reinforced random walks \cite{Bertenghi/Rosales-Ortiz}. This type of memory introduces a novel variant of the standard Brownian motion. And there has been no discussion, to the best of our knowledge, on the non-standard SLE driven by such reinforced Brownian motion with memory. This is a natural theater where the splitting algorithm comes into play. The first question to ask is whether the induced Loewner evolution inherits, in some sense, such long-term memory from the noise-reinforced Brownian motion. Via applying the splitting algorithm, we observe in Figure \ref{fig: noise-reinforced SLE drawings} that the traces of such noise-reinforced SLE do have the tendency to repeat their previous twists compared to the standard SLE. The nontrivial drift term in the Itô semimartingale decomposition of the noise-reinforced Brownian motion breaks down the classic martingale approach, making rigorous arguments hard to proceed. Nevertheless, SLE based methods such as the splitting algorithm are suitable candidates for drawing the noise-reinforced SLE which is again semimartingale with respect to Brownian motion. And with the aid of effective numerical methods e.g.~the splitting algorithm, we will know what exactly to expect from the numerical observations of the geometry of such noise-reinforced SLE.\par
    This paper is organized as follows. In Section \ref{sec: splitting algorithm} we introduce the mechanism of the splitting algorithm and how it applies to the Loewner SDE, following a brief review of the general SLE theory. We then discuss how such splitting algorithm can be extended to some non-standard Loewner evolutions such as the noise-reinforced SLE and fractional SLE. In Section \ref{sec: convergence of splitting algorithm to SLE} we demonstrate that such splitting algorithm is effective in the context of SLE by showing its sup-norm and $L^p$ convergence to the random Loewner curves. In practical simulations, we discretize the driving forces and approximate them via polynomial interpolation. This auxiliary tool is combined with the splitting algorithm to ease computation. And we thus include the convergence analysis of arbitrary $p$th order polynomial interpolation at the end of this section in the context of standard SLE, where we also incorporate exploratory and numerical discussions for such non-standard SLE. In Section \ref{sec: Conclusion remarks}, the drawings and numerical observations are presented in various figures. Significant statistical and geometric phenomena are investigated with discussions on the fractal dimensions, shedding light on the theory of stochastic reinforcement and fractional stochastic processes. Future perspectives on the quantitative and exact rate of algorithm convergence is also mentioned, pointing to new research directions, concluding this paper.

\section{Ninomiya--Victoir splitting algorithm to Schramm--Loewner evolution}\label{sec: splitting algorithm}
    In the section, we review briefly the well-studied SLE and introduce an SDE-based stochastic algorithm to simulate the geometry of this random evolution. Besides verifying its convergence, we also point out that this new algorithm could be applied to non-standard SLE, distinguishing itself from other frequently used numerical schemes.

\subsection{Random Loewner curves from Schramm--Loewner evolution.}
    Let $\mathbb{H}\coloneqq\{z\in\mathbb{C}:\,\Im~z>0\}$ be the upper half-plane and $\overline{\mathbb{H}}$ its closure in $\mathbb{C}$. We say a set $K\subseteq\overline{\mathbb{H}}$ is a hull if $K$ is compact and $\mathbb{H}\backslash K$ is simply connected in $\mathbb{C}$. For any such hull $K$, we can find a unique conformal onto map $g_K:\mathbb{H}\backslash K\to\mathbb{H}$ such that
    \begin{equation*}
        g_K(z) = z + a_1\frac{1}{z} + a_2\frac{1}{z^2} + \cdots \qquad\text{near}\quad\infty,   
    \end{equation*}
    yielding the hydrodynamic limit $z-g_K(z)\to0$ as $\abs{z}\to\infty$. One should remark \cite[Lemma 4.2]{Kemppainen} that $a_1(K)$ is real non-negative and vanishes only if $g_K=Id$ on $\mathbb{H}$.\par
    When there is a growing family $(K_t)_{t\in\mathbb{R}_+}$ of hulls, we say this family has continuous local growth if the radius of $g_t(K_{t+s}\backslash K_t)$ tends to $0$ as $s\to0+$ uniformly in $t\in\mathbb{R}_+$. In this scenario, we conveniently reparametrize $(K_t)_{t\in\mathbb{R}_+}$ so that $a_1(K_t)=2t$ for all $t$. And it is thus convenient to write $g_t\coloneqq g_{K_t}$ on $\mathbb{H}\backslash K_t$. When $(K_t)_{t\in\mathbb{R}_+}$ grows continuously, there exists a real-valued continuous $(\lambda_t)_{t\in\mathbb{R}_+}$ satisfying the forward Loewner equation
    \begin{equation}\label{eqn: g_t, forward Loewner equation}
        \partial_t g_t(z) = \frac{2}{g_t(z)-\lambda_t},\qquad\forall~z\in\mathbb{H}\backslash K_t,
    \end{equation}
    with $g_0=Id$ on $\mathbb{H}$. In fact, there is a one-to-one correspondence \cite[Theorem 4.2]{Kemppainen} between the local growth and the continuous $(\lambda_t)_{t\in\mathbb{R}_+}$. To this recognition, the process $(\lambda_t)_{t\in\mathbb{R}_+}$ essentially determines the evolution $(g_t)_{t\in\mathbb{R}_+}$ as well as the hulls $(K_t)_{t\in\mathbb{R}_+}$. And hence we call $(\lambda_t)_{t\in\mathbb{R}_+}$ the driving force of $(g_t)_{t\in\mathbb{R}_+}$.\par
    Motivated by the study of theoretical physics, SLE was created to describe scaling limit of interface models, where one takes into account the randomness of $(\lambda_t)_{t\in\mathbb{R}_+}$. And the description of such behavior requires a probability space $(\Omega,\mathbb{P})$ with filtration $(\mathscr{F}_t)_{t\in\mathbb{R}_+}$ large enough to support a standard Brownian motion $(B_t)_{t\in\mathbb{R}_+}$. Viewed as a stochastic process on $(\Omega,\mathbb{P})$, $(\lambda_t)_{t\in\mathbb{R}_+}$ is assumed to be $(\mathscr{F}_t)_{t\in\mathbb{R}_+}$-adapted and is often called the driving process.\par
    For certain driving processes $(\lambda_t)_{t\in\mathbb{R}_+}$, the hulls $(K_t)_{t\in\mathbb{R}_+}$ are generated by a curve $\gamma:\mathbb{R}_+\to\overline{\mathbb{H}}$, i.e.~$\mathbb{H}\backslash K_t$ is the unbounded component of $\mathbb{H}\backslash\gamma[0,t]$ for all $t\in\mathbb{R}_+$. And for all such deterministic or random evolutions $(g_t)_{t\in\mathbb{R}_+}$, the curve $\gamma$ can be traced by $\gamma(t)=\lim_{\epsilon\searrow0} g^{-1}_t(\lambda_t+i\epsilon)$. And we call this $\gamma$ the random Loewner curve. It turns out that \cite{Schramm} only when $\lambda_t=\sqrt{\kappa}B_t$ for all $t\in\mathbb{R}_+$ and $\kappa\geq0$, the Loewner curve $\gamma$ exists and uniquely satisfies the conformal invariance and domain Markov property.\par
    The regularity of $\gamma$ is nevertheless complicated. First, $\gamma$ is fractal when it is generated by $(\sqrt{\kappa}B_t)_{t\in\mathbb{R}_+}$. Second, under the same driving process this Loewner curve is simple $\mathbb{P}$-a.s. when $0\leq\kappa\leq4$. And $\cup_{t\geq0}K_t=\mathbb{H}$ but $\gamma$ avoids almost all $z\in\mathbb{H}$ when $4<\kappa<8$, see \cite{Lawler/Schramm/Werner,Rohde/Schramm}. In contrast, the Loewner curve fills $\mathbb{H}$ when $\gamma\geq8$, $\mathbb{P}$-a.s.\par
    The three phases of $\gamma$ are then respectively named the simple, the swallowing and the space-filling phase. Depending on the value of $\kappa$, it is hard to draw precisely the trace of the Loewner curve due to its phase transition and fractality. In this paper, we propose and verify that the splitting algorithm is an effective method for such simulation. To this end, one needs to define the reverse Loewner equation
    \begin{equation}\label{eqn: h_t, reverse Loewner equation}
        \partial_t h_t(z) = \frac{-2}{h_t(z) - \sqrt{\kappa}(B_{T-t}-B_T)},\qquad\forall~z\in\mathbb{H},
    \end{equation}
    up to time horizon $T>0$. This Loewner evolution $(h_t)_{t\in[0,T]}$, running in reverse time direction, generates a curve $\eta:[0,T]\to\overline{\mathbb{H}}$. In fact, it is shown \cite{Rohde/Zhan} that $\eta$ corresponds to $\gamma$ in the way that they have the same distribution on $[0,T]$ modulo real scalar shift $\sqrt{\kappa}B_T$. And thus we call $\eta$ the shifted Loewner curve which preserves statistical properties of $\gamma$ and is the main object of our further discussion.

\subsection{Description of the Ninomiya--Victoir splitting algorithm.}
    Before drawing the random Loewner curve $\eta$, we first rephrase the evolutions $(g_t)_{t\in[0,T]}$ and $(h_t)_{t\in[0,T]}$ in convenient fashions. Setting $\Tilde{g}_t(z)=g_t(z)-\sqrt{\kappa}B_t$ for all $z\in\mathbb{H}\backslash K_t$ and $t\in[0,T]$, we can rewrite (\ref{eqn: g_t, forward Loewner equation}) as
    \begin{equation}
        d\Tilde{g}_t(z) = \frac{2}{\Tilde{g}_t(z)}\,dt - \sqrt{\kappa}\,dB_t,\qquad\forall~z\in\mathbb{H}\backslash K_t
    \end{equation}
    with $\Tilde{g}_0=Id$ on $\mathbb{H}$. On the other hand, we consider $Z_t(z)=h_t(z)-\sqrt{\kappa}B_t$ for all $z\in\mathbb{H}$ and $t\in[0,T]$. Then we know by \cite[Eqn. (6.5)]{Foster/Lyons/Margarint}, the reverse equation (\ref{eqn: h_t, reverse Loewner equation}) can be written as
    \begin{equation}\label{eqn: SDE of Z_t}
        dZ_t = -\frac{2}{Z_t}\,dt + \sqrt{\kappa}\,dB_t,\qquad\text{with}\quad Z_0=iy,
    \end{equation}
    for some sufficiently small initial value $y>0$. 

    \begin{definition}{\textbf{Stochastic Splitting Algorithm.}}\label{def: N-V splitting algorithm}
        Consider the complex-valued SDE with the form
        \[
            dW_t = L_0(W_t)\,dt + L_1(W_t)\,dB_t,\qquad W_0=\xi,
        \]
        with $\xi\in\mathbb{C}$ and smooth vector fields $L_1,L_2$ on $\mathbb{C}$. We also use $\exp(tL_i)z_0$ to denote the unique solution to the ODE flow $\Dot{z}_t=tL_i(z_t)$ starting from $z_0\in\mathbb{C}$ at time $t$, for any $i=1,2$. At each $N$th step, we choose a uniform partition $\mathcal{D}_N\coloneqq\{ t_0=0,t_1,\ldots,t_N=T \}$ with mesh size $h_k\coloneqq t_{k+1}-t_k$ for all $k<N$. The size of the mesh $\abs{h_k}$ is free to choose, but formally $\mathcal{D}_N$ gets finer as the step $N$ advances.\par
        We whence numerically approximate the SDE with the discrete points $(\widetilde{W}_{t_k})_{0\leq k\leq N}$ by $\widetilde{W}_{t_0}=\xi$ and 
        \[
            \widetilde{W}_{t_{k+1}} = \exp\big(\tfrac{1}{2}h_kL_0\big)\exp\big(B_{t_k,t_k+h_k}L_1\big)\exp\big(\tfrac{1}{2}h_kL_0\big)\widetilde{W}_{t_k},\qquad\forall~k=0,\ldots,N-1,
        \]
        with $B_{t_k,t_k+h_k}\coloneqq B_{t_{k+1}}-B_{t_k}$ in standard notation \cite[Chapter 2]{LeGall}. In fact, from the discrete $(\widetilde{W}_{t_k})_{0\leq k\leq N}$ the splitting algorithm further yields a continuous approximation $(\widetilde{W}_t)_{t\in[0,T]}$ in the integral form
        \[
            \widetilde{W}_t = \xi + \frac{1}{2}\int_0^t L_0(\widetilde{W}^{(0)}_s) \,ds + \int_0^t L_1(\widetilde{W}^{(1)}_s)\,dB_s +  \frac{1}{2}\int_0^t L_0(\widetilde{W}^{(2)}_s) \,ds,\qquad\forall~t\in[0,T],
        \]
        where on each mesh interval $t_k\leq t<t_{k+1}$ one defines
        \[
            \widetilde{W}^{(0)}_t\coloneqq \exp\big(\tfrac{1}{2}(t-t_k)L_0\big)\widetilde{W}_{t_k},\quad \widetilde{W}^{(1)}_t\coloneqq\exp\big(B_{t_k,t_k+h_k}L_1\big)\widetilde{W}^{(0)}_{t_{k+1}},\quad \widetilde{W}^{(2)}_t\coloneqq \exp\big(\tfrac{1}{2}(t-t_k)L_0\big)\widetilde{W}^{(1)}_{t_{k+1}}.
        \]
    \end{definition}
    In light of the same distribution between $\gamma$ and $\eta$, it suffices to draw the shifted Loewner curve $\eta$ from the SDE (\ref{eqn: SDE of Z_t}) via the splitting algorithm described in Definition \ref{def: N-V splitting algorithm}. Here the Loewner evolution specifies $L_0(z)=-2/z$ and $L_1(z)=\sqrt{\kappa}$ for all $z\in\mathbb{H}$. And to clarify notation, we use $\{t_k(t),t_{k+1}(t)\}\subseteq\mathcal{D}_N$ for the neighboring points of $t$ in $\mathcal{D}_N$, i.e.~$t_k(t)\leq t<t_{k+1}(t)$. We also use $\abs{\mathcal{D}_N}$ for the mesh size of $\mathcal{D}_N$ and $\normx{\vdot}_T$ for the sup-norm on the interval $[0,T]$. To ease notation, we also use $\normx{\sqrt{\kappa}B_{t_k+\vdot}}_{h_k}$ for $\sup_{0\leq t\leq T}\abs{\sqrt{\kappa}(B_t-B_{t_k(t)})}$. \par
    When applying the splitting algorithm to SLE, at each $N$th step we initiate from $\xi=iy_N$ where $y_N=N^{-1/2}$. The algorithm then yields a discrete $(\widetilde{Z}_{t_k})_{0\leq k\leq N}$ and a continuous $(\widetilde{Z}_t)_{t\in[0,T]}$, both of which approximate $\eta[0,T]$ in their respective sense. In Section \ref{sec: convergence of splitting algorithm to SLE} we show the convergence of this algorithm in the context of SLE, and we state the main results here.

    \begin{theorem}\label{thm: sup-norm convergence}
        \normalfont
        Let $\eta[0,T]$ be the shifted Loewner curve for $\kappa\neq8$. If the mesh size $\abs{\mathcal{D}_N}\to0+$ with $N\to\infty$ at a rate $\abs{\mathcal{D}_N}=o(N^{-3})$, then
        \[
            \mathbb{P}\big(\normy{\eta-\widetilde{Z}(iy_N)}_T\leq\varphi_1(N)\big)\geq1-\varphi_2(N),\qquad\forall~N\geq1,
        \]
        for some monotonically decreasing $\varphi_i:\mathbb{N}_+\to\mathbb{R}_+$ with $\varphi_i\to0+$ for each $i=1,2$.
    \end{theorem}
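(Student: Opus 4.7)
The plan is to decompose the error via the triangle inequality
$$\normy{\eta - \widetilde{Z}(iy_N)}_T \leq \normy{\eta - Z(iy_N)}_T + \normy{Z(iy_N) - \widetilde{Z}(iy_N)}_T,$$
where $Z(iy_N)$ denotes the exact solution of the reverse SDE (\ref{eqn: SDE of Z_t}) with initial condition $iy_N$, and bound each piece separately. The first (initialization) term measures the distance between the regularized reverse flow at height $y_N$ and the limiting shifted Loewner curve $\eta = \lim_{y \downarrow 0} Z(iy)$. For $\kappa \neq 8$ one has quantitative polynomial-in-$y$ control through Rohde--Schramm/Rohde--Zhan-type derivative estimates, obtained by writing $|h_t'(iy)|$ via the reverse Bessel process and invoking the Koebe distortion theorem; taking $y = y_N = N^{-1/2}$ then yields a bound that decays at a polynomial rate in $N$ on an event of overwhelming probability.

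For the second (numerical) term I would run the classical local-then-global Ninomiya--Victoir analysis in three stages. First, an a priori tube estimate: since $L_1(z) = \sqrt{\kappa}$ is a real translation and the $L_0$-flow $z \mapsto \sqrt{z^2 - 2h}$ strictly increases $\Im z$, both $Z_t(iy_N)$ and $\widetilde{Z}_t$ remain deterministically in $\{\Im z \geq y_N\}$, on which $L_0$ is smooth with $C^k$-norm of order $y_N^{-k-1}$. Second, a local truncation error: Taylor-expanding each splitting step against the Stratonovich form of (\ref{eqn: SDE of Z_t}) produces the familiar Ninomiya--Victoir cancellations at the first two stochastic orders and leaves a per-step residual in $L^p$ whose size is a power of $h_k$ times a polynomial in $1/y_N$. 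Third, a global assembly by discrete Gronwall, propagating the Lipschitz constant of $L_0$ on the tube, which is $\sim 1/y_N^2$.

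The main obstacle is that a naive discrete Gronwall with Lipschitz constant $1/y_N^2 = N$ yields an $\exp(TN)$ amplification which no polynomial mesh budget can absorb. I expect one must replace it with the explicit conformal linearization: differentiating (\ref{eqn: SDE of Z_t}) in the initial condition gives $\partial_t Z_t' = 2 Z_t'/Z_t^2$, whose solution $Z_t' = \exp\bigl(\int_0^t 2 Z_s^{-2}\,ds\bigr)$ can be controlled by polynomial-in-$1/y_N$ bounds on $\int_0^T |Z_s|^{-2}\,ds$ coming from Bessel-type moment estimates on the reverse flow (this is precisely where the avoidance of $\kappa = 8$ enters). The same analysis applied to the perturbation equation between $Z$ and $\widetilde{Z}$ then turns the exponential amplification into a polynomial one, after which the local bound combined with the generous gap $\abs{\mathcal{D}_N} = o(N^{-3}) \ll y_N^6$ closes the argument. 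Finally, the high-probability statement is obtained by intersecting the good tube events with Gaussian concentration for $\normx{B}_T$ and the standard modulus-of-continuity bounds for $B$ on the mesh, and calibrating $\varphi_1(N), \varphi_2(N) \downarrow 0$.
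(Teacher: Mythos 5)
The paper's proof hinges on a structural observation that your proposal does not make: for the SLE reverse SDE \eqref{eqn: SDE of Z_t}, the Ninomiya--Victoir step factors as $\iota_{k,1}''\circ\iota_{k,2}\circ\iota_{k,1}'$, where the outer maps are reverse Loewner flows for a \emph{constant} driving force and the middle map is a horizontal translation by $\sqrt{\kappa}B_{t_k,t_k+h_k}$. Consequently, the discrete output $(\widetilde{Z}_{t_k})_k$ coincides exactly with a reverse Loewner flow driven by the piecewise-constant, c\`adl\`ag interpolant $\tilde\lambda_t=\sum_k\sqrt{\kappa}B_{t_k}\mathbbm{1}_{[t_k-h_k/2,t_k+h_k/2)}(t)$. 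This lets the paper reduce the "numerical error" to a comparison of two reverse Loewner flows with different driving functions, which is handled directly by the Viklund--Rohde--Wong perturbation estimate: the amplification factor is $\exp(\int_0^T|\zeta(s)|\,ds)\leq (4N+1)^{1/2}$, polynomial and fully explicit. Your proposal instead follows the generic NV template (tube estimate, local Taylor truncation against the Stratonovich form, global Gronwall) and tries to repair the fatal $\exp(TN)$ amplification by passing to the variational equation $\partial_tZ_t'=2Z_t'/Z_t^2$. This is the right \emph{intuition} — the polynomial derivative bound for the reverse Loewner flow is what ultimately underlies Viklund--Rohde--Wong — but as written it has a real gap: the "perturbation equation between $Z$ and $\widetilde Z$" is not available in usable form unless you first recognize that $\widetilde Z$ is itself a reverse Loewner flow, because the splitting output is not a solution of a perturbed SDE to which the linearization applies directly. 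Without that observation, the accumulation of local truncation errors still passes through a product of one-step Lipschitz constants, and it is not enough to know that the \emph{flow-map derivative} $|h_t'(iy_N)|$ is polynomial; one has to be able to substitute it for the one-step Lipschitz constants, which is precisely what the Loewner-flow identification accomplishes.

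Two secondary points. First, the paper actually uses a four-term decomposition, inserting two intermediate piecewise-constant processes $\sum_kZ_{t_k}\mathbbm{1}_{[t_k,t_{k+1})}$ and $\sum_k\widetilde Z_{t_k}\mathbbm{1}_{[t_k,t_{k+1})}$; the piece $\|\widetilde Z-\sum_k\widetilde Z_{t_k}\mathbbm{1}\|_T$ is handled with an elementary monotonicity argument on $\Im\widetilde Z_t$ plus Gaussian concentration (Proposition \ref{prop: estimate event B4}), with no Taylor expansion of the scheme at all. Second, $\kappa\neq 8$ enters through existence/H\"older regularity of the trace and the derivative bound of \cite{Tran} used in Proposition \ref{prop: estimate event B1}, rather than through Bessel moment estimates on $\int|Z_s|^{-2}ds$; and the "generous gap" you invoke is tight, since $y_N^6=N^{-3}$ is exactly the scale of $|\mathcal{D}_N|=o(N^{-3})$, so the margin you need has to come from elsewhere in the argument, not from mesh overkill.
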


    The above theorem guarantees that the output $(\widetilde{Z}_t)_{t\in[0,T]}$ simulates SLE$(\kappa)$ and is further refined as the step $N$ advances. Apart from this sup-norm convergence local on $[0,T]$, we should the splitting algorithm is effective in an alternative $L^p$ point of view. Namely, we have the following result.

    \begin{theorem}\label{thm: Lp-norm convergence}
        \normalfont
        Let $\eta[0,T]$ be the shifted Loewner curve for $\kappa\neq8$. If the mesh size $\abs{\mathcal{D}_N}\to0+$ with $N\to\infty$ at a rate $\abs{\mathcal{D}_N}=o(N^{-3})$ and $p\geq2$, then
        \[
            \mathbb{E}\bigg[\int_0^T \big|\eta_t-\widetilde{Z}_t (iy_N)\big|^p\,dt\vdot I_{\mathcal{A}_N}\bigg]\leq\psi_1(N),\qquad\text{with}\quad\mathbb{P}(\mathcal{A}_N) \geq 1-\psi_2(N),\qquad\forall~N\geq1,
        \]
        for some monotonically decreasing $\psi_i:\mathbb{N}_+\to\mathbb{R}_+$ with $\psi_i\to0+$ for each $i=1,2$.
    \end{theorem}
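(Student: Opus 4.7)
The plan is to deduce Theorem \ref{thm: Lp-norm convergence} as an essentially immediate corollary of Theorem \ref{thm: sup-norm convergence}, exploiting the fact that uniform pathwise control on $[0,T]$ automatically provides the same order of $L^p$-in-time control, up to a multiplicative constant $T^{1/p}$. In particular, no additional stochastic analysis beyond what is already required for the sup-norm result is needed, and the choice of the high-probability event in the statement is free for us to make.

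Concretely, I would take the event $\mathcal{A}_N$ in Theorem \ref{thm: Lp-norm convergence} to be exactly the one furnished by Theorem \ref{thm: sup-norm convergence}, namely
$$
\mathcal{A}_N \;:=\; \big\{\normy{\eta-\widetilde{Z}(iy_N)}_T \leq \varphi_1(N)\big\},
$$
so that $\mathbb{P}(\mathcal{A}_N)\geq 1-\varphi_2(N)$ at no cost. On $\mathcal{A}_N$ one has the pathwise pointwise bound $|\eta_t-\widetilde{Z}_t(iy_N)|\leq\varphi_1(N)$ for every $t\in[0,T]$, hence the deterministic estimate
$$
\int_0^T |\eta_t-\widetilde{Z}_t(iy_N)|^p\,dt\cdot I_{\mathcal{A}_N} \;\leq\; T\,\varphi_1(N)^p\cdot I_{\mathcal{A}_N}.
$$
Taking expectations and using $\mathbb{E}[I_{\mathcal{A}_N}]\leq 1$ yields
$$
\mathbb{E}\bigg[\int_0^T |\eta_t-\widetilde{Z}_t(iy_N)|^p\,dt\cdot I_{\mathcal{A}_N}\bigg] \;\leq\; T\,\varphi_1(N)^p,
$$
so the choices $\psi_1(N):=T\,\varphi_1(N)^p$ and $\psi_2(N):=\varphi_2(N)$ inherit monotonic decrease and convergence to $0$ directly from $\varphi_1,\varphi_2$, finishing the proof.

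Accordingly, the genuine obstacle lies entirely inside Theorem \ref{thm: sup-norm convergence}; once that is in hand, Theorem \ref{thm: Lp-norm convergence} reduces to the elementary inequality $\int_0^T f(t)^p\,dt\leq T\normx{f}_T^p$. I note that the hypothesis $p\geq 2$ plays no role whatsoever in this reduction and the argument goes through for all $p\geq 1$. One can speculate that the authors impose it in anticipation of a sharper direct route through the Burkholder--Davis--Gundy inequality applied to the martingale part of the SDE decomposition of $Z_t-\widetilde{Z}_t$, which would yield quantitative rates that are stronger than the ones extracted here from the sup-norm bound, but at the cost of more delicate handling of the singular drift $2/Z_t$ near the real line --- precisely the place where the hypothesis $\kappa\neq 8$ enters through non-degeneracy of the reverse Loewner flow at the tip.
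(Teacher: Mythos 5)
Your argument is correct and is genuinely simpler than the paper's. The paper does not derive Theorem \ref{thm: Lp-norm convergence} from Theorem \ref{thm: sup-norm convergence}; instead it runs a parallel $L^p$ analysis, decomposing $\eta-\widetilde{Z}(iy_N)$ into four pieces (Propositions \ref{prop: estimate A1}--\ref{prop: estimate A4}), each of whose $p$th-moment time-integral is controlled separately. Three of these are handled on high-probability events $\mathcal{A}^{(1)}_N,\mathcal{A}^{(2)}_N,\mathcal{A}^{(3)}_N$, but the fourth --- the intra-step splitting error $\widetilde{Z}_{t_k(t)}-\widetilde{Z}_t$ --- is bounded in expectation over the \emph{whole} sample space via explicit Gamma-function formulas for the even moments of $\sup_{0\le s\le h_k}|B_s|$ combined with Lebesgue-space interpolation between $L^{2\ell}$ and $L^{2\ell+2}$; this is precisely where the hypothesis $p\ge 2$ is consumed. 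Your reduction sidesteps all of this by simply applying the deterministic inequality $\int_0^T|f|^p\,dt\le T\normx{f}_T^p$ on the sup-norm event, and it correctly reveals that $p\ge2$ is superfluous for the theorem as literally stated. What the paper's modular decomposition buys is a smaller event $\mathcal{A}_N$ (it need not absorb the oscillation-of-$\widetilde{Z}$ event $\mathcal{B}^{(4)}_N$ that shows up in the sup-norm proof) and a bound $\psi_1(N)$ that separates the four sources of error, which is more useful for the later discussion of quantitative rates; your approach gives a perfectly valid, more elementary bound $\psi_1(N)=T\varphi_1(N)^p$ inherited wholesale from the sup-norm theorem. Your speculation about Burkholder--Davis--Gundy is close in spirit to the paper's Proposition \ref{prop: estimate A4} but not quite: the paper uses the reflection-principle moment formula rather than BDG, and the $\kappa\neq8$ hypothesis is inherited from Propositions \ref{prop: estimate event B1} and \ref{prop: estimate A2} (existence of the continuous trace and Hölder continuity of $\eta$), not from non-degeneracy of the drift at the tip.
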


    At each $N$th step, we run the algorithm from the initial value $iy_N$ with $y_N=N^{-1/2}$. Here we cannot choose $y_N$ constant over $N$ since otherwise the convergence pattern breaks down. One should also remark that $y_N$ should not decay too fast to destroy certain inequalities necessary for the proof in Section \ref{sec: convergence of splitting algorithm to SLE}.\par
    Before going through the proof of main results, let us diverge to some insights on the extension of this splitting algorithm to non-standard Loewner evolutions, with applications to their statistical mechanics phenomena, such as the fractal dimension of their associated random Loewner curves.

\subsection{Non-standard Loewner chains driven by semimartingales.}\label{sec: Non-standard Loewner chains driven by semimartingales}
    Heuristically, the random vibration of Loewner curves inherits from the randomness of their driving forces. By the wide applicability such as the random dendritic growth at off-critical phase \cite{Johansson/Sola} and the diffusion-limited aggregation \cite{Miller/Sheffield}, one is motivated to consider more general driving forces than the ubiquitous Brownian motion. Lévy process, e.g., is shown \cite{Peltola/Schreuder} to drive locally connected $(\partial\mathbb{H}\backslash K_t)_{t\in\mathbb{R}_+}$. In \cite{Margarint/Shekhar/Yuan}, it was shown that certain continuous semimartingales draw continuous curves.\par
    In particular, the \textit{noise-reinforced Brownian motion} \cite{Bertoin} extends $(B_t)_{t\in\mathbb{R}_+}$ in semimartingale form and relates the notion of stochastic reinforcement. This model allows noise to repeat itself infinitesimally, thus creating strengthened fluctuations as time flows. Readers are also referred to \cite{Bertoin2} for exposition on its Bessel norm process and to \cite{Rosales-Ortiz} on the similarly reinforced Lévy process.\par
    Concerning the mathématique, we start with the noise-reinforced Brownian motion $(B^p_t)_{t\in\mathbb{R}_+}$ defined by
    \[
        B^p_t\coloneqq t^p\int_0^t s^{-p}\,dB_s,\qquad\text{with covariance}\quad \mathbb{E}[B^p_sB^p_t] = (1-2p)^{-1}s^{1-p}t^p,\qquad\forall~0\leq s\leq t,
    \]
    where $-\infty<p<1/2$ is called the reinforcement strength. Notice that \cite[Eqn. (3)]{Bertoin} $B^p_t$ has the same distribution as $(1-2p)^{-1/2}t^p B_{t^{1-2p}}$ for all $t\in\mathbb{R}_+$ and solves the SDE $dB^p_t=pt^{-1}B^p_t\,dt+\,dB_t$, $B^p_0=B_0=0$, but does not satisfy the time-reversal invariance , unlike standard Brownian motion. The novelty here is to conceive the noise-reinforced SLE driven by $(B^p_t)_{t\in\mathbb{P}_+}$ in the fashion
    \begin{equation}\label{eqn: Z_t noise-reinforced}    
        dZ_t = -\frac{2}{Z_t}\,dt + \sqrt{\kappa}\,dB^p_t = \big(\sqrt{\kappa}t^{-1}pB^p_t-\frac{2}{Z_t}\big)\,dt + \sqrt{\kappa}\,dB_t,\qquad\text{with}\quad Z_0\in\mathbb{H},
    \end{equation}
    modified from (\ref{eqn: SDE of Z_t}). Given that the reinforced mechanism introduces long-term memory on $(B^p_t)_{t\in\mathbb{R}_+}$, what will be the distortion effect on its driven Loewner curve? The geometry of which is known very little to us. It is our belief that the splitting algorithm should numerically predict statistical phenomena which leads to deeper exploration.\par
    Now, to initial the splitting algorithm for the noise-reinforced SLE, instead of replacing by the time-inhomogeneous field $L_0^p:z\in\mathbb{H}\mapsto\sqrt{\kappa}pB^p_t/t-2/z$ at $t\in[0,T]$, we choose to substitute the stochastic differential $dB_t$ with $dB^p_t$ and whence preserves the time-homogeneous field $L_0:z\mapsto-2/z$. Then with a suitable $y_N\to0+$, the splitting algorithm produces the discrete points $(\widetilde{W}^p_{t_k})_{0\leq k\leq N}$ according to 
    \[
        \widetilde{W}^p_{t_{k+1}} = \exp\big(\tfrac{1}{2}h_kL_0\big)\exp\big(B^p_{t_k,t_k+h_k}L_1\big)\exp\big(\tfrac{1}{2}h_kL_0\big)\widetilde{W}^p_{t_k},\qquad\forall~k=0,\ldots,N-1,
    \]
    which subsequently stretches to a continuous approximation $(\widetilde{W}^p_{t})_{t\in[0,T]}$ written out in integral form
    \[
        \widetilde{W}^p_t = \xi + \frac{1}{2}\int_0^t L_0(\widetilde{W}^{(0)}_{p,s}) \,ds + \int_0^t L_1(\widetilde{W}^{(1)}_{p,s})\,dB_s +  \frac{1}{2}\int_0^t L_0(\widetilde{W}^{(2)}_{p,s}) \,ds,\qquad\forall~t\in[0,T],
    \]
    where the increments $\widetilde{W}^{(0)}_{p,t}$ is defined similarly as $\widetilde{W}^{(0)}_{t}$ but with respect to $B^p_t$ in Definition \ref{def: N-V splitting algorithm}, and similarly for $\widetilde{W}^{(1)}_{p,t}$ and $\widetilde{W}^{(2)}_{p,t}$ on $t\in[0,T]$. Simulations of the noise-reinforced SLE via splitting algorithm can be found in Section \ref{sec: Conclusion remarks}, where we shall also discuss their numerical consequences.\par
    \begin{figure}[t!]
    \centering
    \begin{subfigure}[t]{0.45\textwidth}
        \centering
        \includegraphics[width=\textwidth]{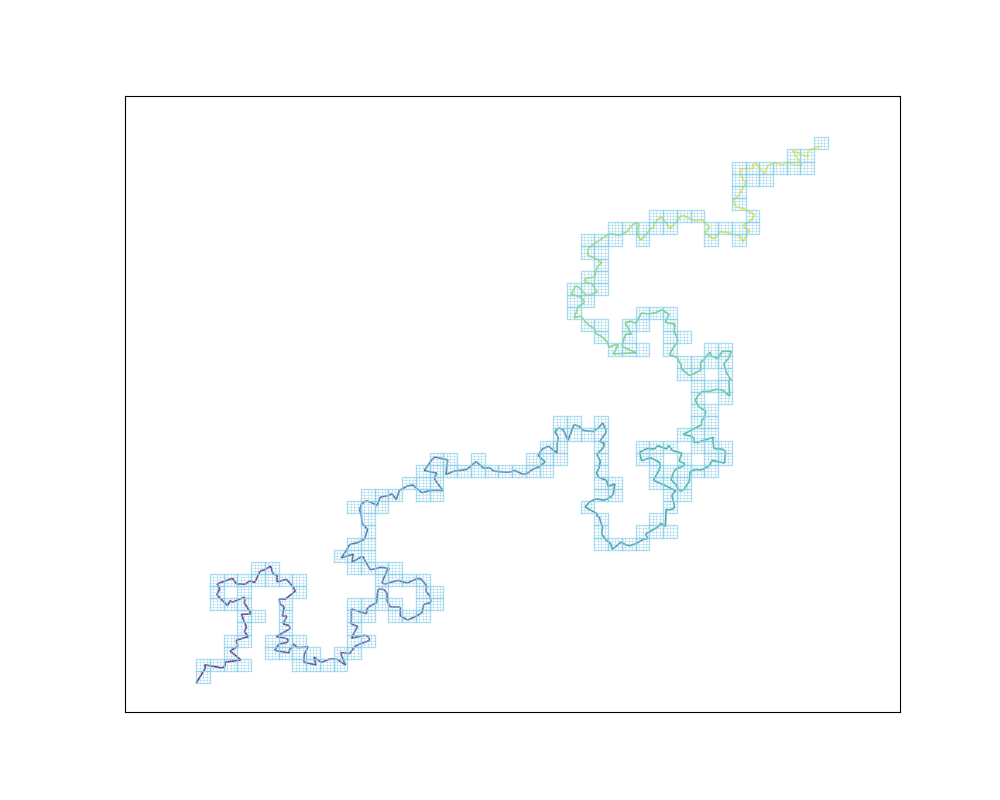} 
        \label{SLE_boxcounting}
    \end{subfigure}
    \hspace{0.0\textwidth} 
    \begin{subfigure}[t]{0.45\textwidth}
        \centering
        \includegraphics[width=\textwidth]{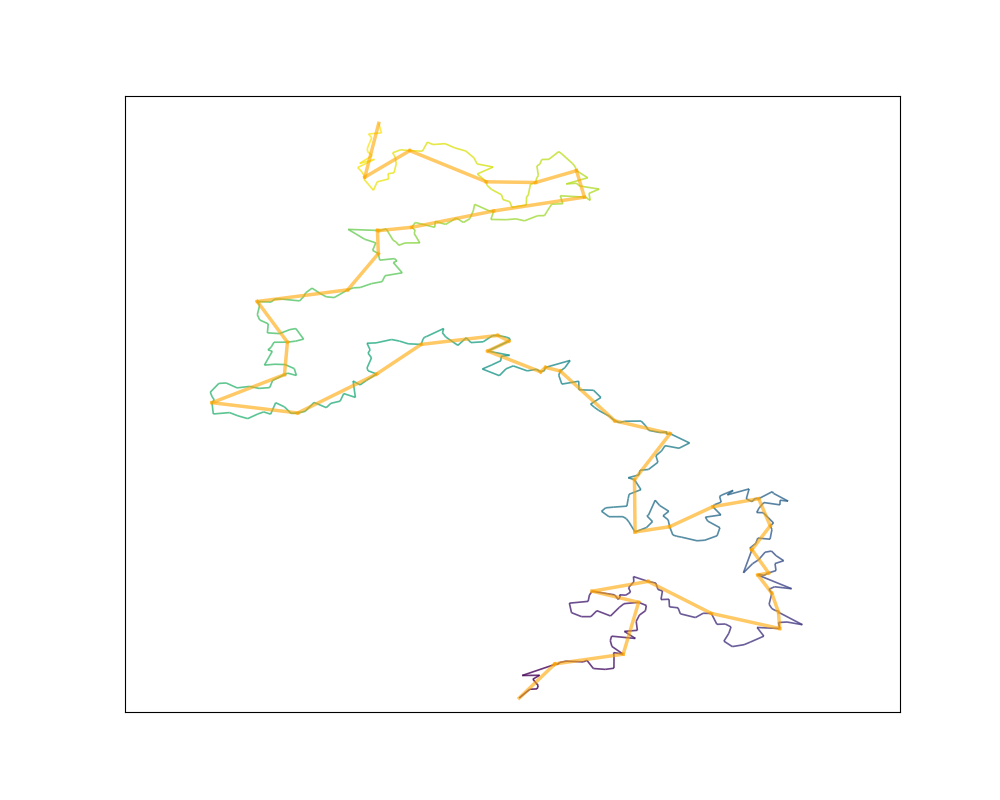} 
        \label{SLE_yardstick}
    \end{subfigure}
    \vspace{-10pt} 
    \caption{Splitting drawing of SLE(4) with box-counting and yardstick illustrations for computing fractal dimensions.}\label{fig: box-counting and yardstick}
\end{figure}
    Despite the rich but mostly unknown geometric features of the Loewner chains driven by semimartingale in the form (\ref{eqn: Z_t noise-reinforced}), these processes are not scale-invariant in general. Recent striking advances into the realm of non-semimartingale driving forces, e.g.~\cite{Tizdast/Ebadi/Cheraghalizadeh/Najafi/Andrade/Herrmann}, has revealed the perseverance of scale-invariance with properly modified deterministic forces. One illustrative example is the \textit{fractional Brownian motion}. Denoted by $(B^H_t)_{t\in\mathbb{R}_+}$, the fractional Brownian motion is written out in fractional integral \cite{Reed/Lee/Truong} for any $0\leq s\leq t$,
    \[
        B^H_t\coloneqq\frac{1}{\Gamma(H+1/2)} \int_0^t (t-s)^{H-1/2}\,dB_s,\qquad\text{with covariance}\quad \mathbb{E}[B^H_s B^H_t] = \big(\abs{s}^{2H}+\abs{t}^{2H}-\abs{t-s}^{2H}\big),
    \]
    with Hurst exponent $0<H<1$ and is revealed by Mandelbrot and van Ness \cite{Mandelbrot/van Ness} to have stationary increments.\par
    We also mention that $(B^H_t)_{t\in\mathbb{R}_+}$ is neither a Markov process nor a semimartingale \cite{Shevchenko}, but significantly it admits time-reversal symmetry \cite{Darses}, i.e.~$B^H_{T-t}-B^H_T$ is distributed identically as $B^H_t$ for all $t\in[0,T]$. In light of this, we are able to modify (\ref{eqn: h_t, reverse Loewner equation}) for the fractional SLE characterized by
    \[
        \partial_t h_t(z) = |h_t(z)-\kappa^HB^H_t|^{2-\frac{1}{H}}\vdot\frac{-2}{h_t(z)-\kappa^H(B^H_{T-t}-B^H_T)},\qquad\forall~z\in\mathbb{H},
    \]
    up to time horizon $T>0$. This form was first proposed in \cite[Eqn. (14)]{Tizdast/Ebadi/Cheraghalizadeh/Najafi/Andrade/Herrmann} which novelly takes into consideration the scale-invariance of $(h_t)_{t\in[0,T]}$. Observe also that this expression can be equivalently written into the Loewner flow driven by $(B^H_t)_{t\in\mathbb{R}_+}$ as
    \begin{equation}
        dZ_t = |Z_t|^{2-\frac{1}{H}}\vdot\frac{-2}{Z_t}\,dt + \kappa^H\,dB^H_t,\qquad\text{with}\quad Z_0\in\mathbb{H}.
    \end{equation}
    Like the noise-reinforced SLE, the theoretical aspects of the fractional SLE remain poorly understood. Fortunately, the pioneering work of \cite{Tizdast/Ebadi/Cheraghalizadeh/Najafi/Andrade/Herrmann} has simulated these Loewner traces, observing their self-similarity, fractal dimension via a yardstick method \cite{Weiss}, and how they are influenced by the values of $H$ and $\kappa$.\par
    In this work, we confirm their numerical observations via the alternative splitting method in Definition \ref{def: N-V splitting algorithm}. With properly selected $y_N\to0+$, we get a set of discrete points $(\widetilde{W}^H_{t_k})_{0\leq k\leq N}$ by
    \[
        \widetilde{W}^H_{t_{k+1}} = \exp\big(\tfrac{1}{2}h_kL_0\big)\exp\big(B^H_{t_k,t_k+h_k}L_1\big)\exp\big(\tfrac{1}{2}h_kL_0\big)\widetilde{W}^H_{t_k},\qquad\forall~k=0,\ldots,N-1,
    \]
    where $L_1:z\mapsto\kappa^H B^H_t$ and $L_0:z\mapsto\abs{z}^{2-\frac{1}{H}}\vdot(-2/z)$ is no longer holomorphic if $H\neq1/2$. These discrete points then subsequently stretches to a continuous process $(\widetilde{W}^H_t)_{t\in[0,T]}$ with integral form
    \[
        \widetilde{W}^H_t = \xi + \frac{1}{2}\int_0^t L_0(\widetilde{W}^{(0)}_{s,H}) \,ds + \int_0^t L_1(\widetilde{W}^{(1)}_{s,H})\,dB_s +  \frac{1}{2}\int_0^t L_0(\widetilde{W}^{(2)}_{s,H}) \,ds,\qquad\forall~t\in[0,T],
    \]
    Here the increments $\widetilde{W}^{(k)}_{s,H}$ are defined similarly as in Definition \ref{def: N-V splitting algorithm}, but with respect to new driving process $(B^H_t)_{t\in[0,T]}$.\par
    Our numerical observations are deferred to Section \ref{sec: Conclusion remarks} where we record the splitting drawings of the noise-reinforced SLE and the fractional SLE curves. As significant statistical phenomena, the fractal dimension of the fractional SLE was first computed in  \cite{Tizdast/Ebadi/Cheraghalizadeh/Najafi/Andrade/Herrmann} and now confirmed in this work. Instead of the yardstick method therein, we adopt the classical box-counting method \cite{Tanaka/Kayama/Kato/Ito} in the context of splitting algorithm. An illustration can be found in Figure \ref{fig: box-counting and yardstick} to schematic difference between these two methods. See also \cite{Chen1,Chen2} for schematic illustrations.

\section{Various convergence analysis of the discrete splitting algorithm}\label{sec: convergence of splitting algorithm to SLE} The natural regularity of SLE dynamics renders us sufficient tools to reveal the convergence of the splitting algorithm in various topologies. Specially, we will show the splitting $\widetilde{Z}(iy_N)$ converges to the random Loewner curve, respectively, in sup-norm and $L^p$ locally on $[0,T]$ in high probability.

\subsection{Local sup-norm convergence in probability.}
    We first look at the sup-norm convergence of the splitting algorithm. An increasing sequence $\phi:\mathbb{N}_+\to\mathbb{R}_+$ is called subpower if $\lim_{N\nearrow\infty}N^{-\nu}\phi(N)=0$ for all $\nu>0$. The proof of Theorem \ref{thm: sup-norm convergence} relies on the following propositions.

    \begin{proposition}\label{prop: estimate event B1}
        \normalfont
        Let $0<\beta<1$. There exists a subpower sequence $(\phi_N)_{N\geq1}$ such that if $\abs{\mathcal{D}_N}\leq 1/N$ and if we write the event
        \[
            \mathcal{B}^{(1)}_N\coloneqq\big\{\normy{\eta-\sum_{k<N}\eta_{t_k}\mathbbm{1}_{[t_k,t_k+h_k)} }_T\leq \tfrac{2\phi(N^{1/2})}{(1-\beta)N^{(1-\beta)/2}} \big\},
        \]
        where $\phi(N^{1/2})\coloneqq\phi_k$ for $k\leq N^{1/2}<k+1$, then $1-\mathbb{P}(\mathcal{B}^{(1)}_N)\leq C N^{-c}$ for some constants $c,C>0$.
    \end{proposition}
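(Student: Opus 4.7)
The plan is to view this as a modulus-of-continuity estimate for the shifted Loewner curve $\eta$ on $[0,T]$. Since $\lvert\mathcal{D}_N\rvert\leq 1/N$, the sup-norm error between $\eta$ and its left-endpoint piecewise-constant interpolation equals $\sup_{t\in[0,T]}\lvert\eta_t-\eta_{t_k(t)}\rvert$, and the proposition is equivalent to a high-probability H\"older-type bound of exponent $(1-\beta)/2$ on $\eta$, up to a subpower factor, scaled by the mesh size $1/N$.

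To establish this regularity I would interpolate through the reverse Loewner flow $(Z_t(iy))_{t\in[0,T]}$ with $y_N=N^{-1/2}$. For any $t\in[t_k(t),t_{k+1}(t))$ write
\[
\lvert\eta_t-\eta_{t_k(t)}\rvert\leq\lvert\eta_t-Z_t(iy_N)\rvert+\lvert Z_t(iy_N)-Z_{t_k(t)}(iy_N)\rvert+\lvert Z_{t_k(t)}(iy_N)-\eta_{t_k(t)}\rvert.
\]
The two outer terms are controlled by a Rohde--Schramm-type derivative estimate for the reverse flow: on a high-probability event there is a subpower sequence $(\phi_N)_{N\geq1}$ with $\lvert\partial_y Z_t(iy)\rvert\leq\phi(1/y)\,y^{-\beta}$ uniformly in $t\in[0,T]$ and $y\in(0,1]$. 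Integrating then yields
\[
\lvert\eta_t-Z_t(iy_N)\rvert\leq\int_0^{y_N}\lvert\partial_y Z_t(iy)\rvert\,dy\leq\phi(N^{1/2})\cdot\frac{y_N^{1-\beta}}{1-\beta}=\frac{\phi(N^{1/2})}{(1-\beta)N^{(1-\beta)/2}},
\]
and identically for the third term, so the two contributions combine to produce the factor $2$ in the stated bound.

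The middle term is dominated directly from the SDE (\ref{eqn: SDE of Z_t}): the martingale part is bounded by $\sqrt{\kappa}\sup_{\lvert t-s\rvert\leq 1/N}\lvert B_t-B_s\rvert\lesssim N^{-1/2}\sqrt{\log N}$ by L\'evy's modulus of continuity, while the drift obeys $\big\lvert\int_{t_k(t)}^t 2/Z_u(iy_N)\,du\big\rvert\leq 2\lvert t-t_k(t)\rvert/y_N\leq 2N^{-1/2}$, using $\lvert Z_u(iy_N)\rvert\geq\Im Z_u(iy_N)\geq y_N$ for the reverse flow started at $iy_N$. Both contributions are $o(N^{-(1-\beta)/2})$ whenever $\beta>0$ and can be absorbed into the subpower factor $\phi$, so the triangle inequality delivers the stated bound on the event $\mathcal{B}^{(1)}_N$.

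Finally, the probability estimate $1-\mathbb{P}(\mathcal{B}^{(1)}_N)\leq CN^{-c}$ reduces to polynomial tail bounds on the random constant in the Rohde--Schramm derivative estimate together with the standard Gaussian tail of $\sup_{\lvert t-s\rvert\leq 1/N}\lvert B_t-B_s\rvert$ via Markov's inequality. The main obstacle is obtaining the Rohde--Schramm-type derivative bound uniformly over $t\in[0,T]$ and $y\in(0,y_N]$ with quantitative tail: this is precisely where the exclusion $\kappa\neq8$ enters, since at $\kappa=8$ the optimal trace H\"older exponent degenerates and no such admissible pair $(\phi_N,\beta)$ exists. This step is classical in the SLE literature but requires Girsanov/martingale machinery adapted to the reverse flow, and tuning $y_N=N^{-1/2}$ is essential so that the integration bound $y_N^{1-\beta}=N^{-(1-\beta)/2}$ matches the mesh scale without causing the middle term to blow up.
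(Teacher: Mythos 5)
Your proposal is correct and is, at heart, the same argument the paper gives, just unpacked. The paper replaces $\eta$ by the distributionally equivalent $\gamma$, takes the polynomial-tail derivative bound $\lvert\partial_z\tilde g_t^{-1}(iy)\rvert\le c\,y^{-\beta}$ uniformly for $t\in[0,1]$ and $y\in[0,N^{-1/2}]$ from Tran's Eqn.\ (21), combines it with the L\'evy-type oscillation bound on $\sqrt\kappa B$, and then invokes Tran's Lemma 2.5, which gives $\phi(N^{1/2})^{-1}\lvert\gamma_t-\gamma_{t_k}\rvert\le 2\int_0^{N^{-1/2}}\lvert\partial_z\tilde g_t^{-1}(ir)\rvert\,dr\le\tfrac{2}{1-\beta}N^{-(1-\beta)/2}$. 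Your three-term triangle inequality through $Z_t(iy_N)$ is precisely the internal content of that lemma: the two outer terms correspond to integrating the derivative bound in $y$ up to $y_N=N^{-1/2}$, and the middle term is exactly the role the driver-oscillation event plays. The only genuine differences are (i) you phrase everything through the reverse flow $Z_t(iy)$ rather than the forward inverse flow $\tilde g_t^{-1}$ -- this is legitimate by $h_t\overset{d}{=}g_t^{-1}$, and it is in fact closer in spirit to how the paper uses $Z$ elsewhere; and (ii) you make explicit the bound $\Im Z_u(iy_N)\ge y_N$ on the drift term, which the paper sweeps into the lemma. Your remark that the $\kappa\neq 8$ restriction and the choice $y_N=N^{-1/2}$ are needed for the derivative estimate to match the mesh scale is exactly the right diagnosis of where the hypotheses enter.
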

    \begin{proof}
        When there is no confusion, we write $t_k\coloneqq t_k(t)$. It is clear that if we replace all $\eta$'s in $\mathcal{B}^{(1)}_N$ by $\gamma$, its probability stays the same. Invoking \cite[Theorem 3.4.2]{Lawler/Limic} according to Ref. \cite{Lawler/Limic} , we find constants $C,C^\prime>0$ such that
        \begin{equation}\label{eqn: event 1 for B1}
            \mathbb{P}\big(\text{osc}( (\sqrt{\kappa}B_t)_{t\in[0,T]},1/N) \leq C\sqrt{\log N/N} \big) \geq 1- C^\prime N^{-2}.
        \end{equation}
        In light of \cite[Eqn. (21)]{Tran} we can find constants $c,c^\prime,C^{\prime\prime}>0$ and $1<\beta<1$ such that
        \begin{equation}\label{eqn: event 2 for B1}
            \mathbb{P} \big(|\partial_z\Tilde{g}^{-1}_t(iy)|\leq cy^{-\beta},\;\forall~t\in[0,1],~y\in[0,1/N^{1/2}]\big)\geq 1- C^{\prime\prime}N^{-c^\prime}.
        \end{equation}
        By \cite[Lemma 2.5]{Tran}, there exists a subpower sequence $(\phi_N)_{N\geq1}$ such that on the intersection of the two events in (\ref{eqn: event 1 for B1}) and (\ref{eqn: event 2 for B1}), we have
        \[
            \phi(N^{1/2})^{-1}\abs{\gamma_{t}-\gamma_{t_k}} \leq 2\int_0^{1/N^{1/2}} |\partial_z\Tilde{g}_t^{-1}(ir)|\,dr \leq \frac{2}{1-\beta}N^{2/(1-\beta)}
        \]
        with $0<\beta<1$. Hence, the complement of $\mathcal{B}^{(1)}_N$ shrinks polynomially and the assertion is verified.
    \end{proof}

    \begin{proposition}\label{prop: estimate event B2}
        \normalfont
        Let $0<\delta<1$ and $\ell_N\coloneqq N^{(1-\delta)/4}$. If we write the event
        \[
            \mathcal{B}^{(2)}_N\coloneqq\big\{ \normy{Z(iy_N)-\eta}_T\leq N^{-(1-\delta)/4} \big\},
        \]
        then there exists a decreasing sequence $\epsilon_N\to0+$ such that $1-\mathbb{P}(\mathcal{B}^{(2)}_N)\leq\epsilon_N$.
    \end{proposition}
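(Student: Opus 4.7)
The plan is to express the discrepancy $Z_t(iy_N)-\eta_t$ as a complex line integral of the derivative of the reverse Loewner flow in its starting point, and to then control that integral by a Rohde--Schramm-type polynomial bound on the derivative, in the same spirit as the estimate (\ref{eqn: event 2 for B1}) invoked in the proof of Proposition~\ref{prop: estimate event B1}. Concretely, since $\eta_t=\lim_{y\searrow 0}Z_t(iy)$ in $\overline{\mathbb{H}}$ and $z\mapsto Z_t(z)$ is analytic on $\mathbb{H}$ for each realization, one has
\[
Z_t(iy_N)-\eta_t \;=\; i\int_0^{y_N}\partial_z Z_t(iy)\,dy,\qquad t\in[0,T].
\]

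The essential input is then the polynomial derivative bound $|\partial_z Z_t(iy)|\leq c\,y^{-\beta}$ that holds, with probability at least $1-CN^{-c'}$, uniformly in $t\in[0,T]$ and $y\in(0,y_N]$, for some $0<\beta<1$ and positive constants $c,c',C$. Such a bound is the reverse-flow counterpart of (\ref{eqn: event 2 for B1}) and transfers from the forward inverse flow to $Z_t$ via the Rohde--Zhan time-reversal correspondence, without inflating the probability cost beyond polynomial decay in $N$. Once this estimate is in hand, integrating in $y$ and taking the supremum over $t\in[0,T]$ gives
\[
\normy{Z(iy_N)-\eta}_T \;\leq\; \int_0^{y_N}c\,y^{-\beta}\,dy \;=\; \frac{c}{1-\beta}\,N^{-(1-\beta)/2}.
\]
It then suffices to fix $\beta\in(0,1)$ with $(1-\beta)/2>(1-\delta)/4$, i.e.\ any $\beta<(1+\delta)/2$, so that the right-hand side is dominated by $N^{-(1-\delta)/4}$ for all sufficiently large $N$; setting $\epsilon_N\coloneqq CN^{-c'}$, adjusted on the finite exceptional range of small $N$ to keep the sequence monotone, yields the claimed $\epsilon_N\to0+$.

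The main obstacle is to establish the derivative estimate $|\partial_z Z_t(iy)|\leq c\,y^{-\beta}$ \emph{uniformly} in $(t,y)\in[0,T]\times(0,y_N]$ rather than merely pointwise: since $\partial_z Z_t$ solves the linear ODE driven by $2/Z_t^{\,2}$, one has to rule out degeneration of $|Z_t(iy)|$ near the real axis, which is precisely the delicate content of the SLE derivative moment estimates underlying Tran's bound (\ref{eqn: event 2 for B1}). This is also the step where the behavior near $\kappa=8$ becomes singular, consistent with the exclusion of that value in Theorems~\ref{thm: sup-norm convergence}--\ref{thm: Lp-norm convergence}.
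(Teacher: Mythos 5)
Your route is genuinely different from the paper's. The paper's proof is essentially two lines: it invokes Lemma 6.7 of Foster--Lyons--Margarint, which gives an almost sure pathwise bound $\sup_{0\leq t\leq T}|Z_t(iy_N)-\eta_t|\leq C(\omega)\,y_N^{1-\delta}$ with a single random constant $C(\omega)$ valid for all $N$; since $y_N^{1-\delta}=N^{-(1-\delta)/2}$, the event $\{C(\omega)\leq\ell_N\}$ sits inside $\mathcal{B}^{(2)}_N$, and $\epsilon_N:=\mathbb{P}(C(\omega)\geq\ell_N)\to 0$ simply because $\ell_N\nearrow\infty$ while $C(\omega)<\infty$ a.s.\ --- no decay \emph{rate} is claimed or needed. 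You instead try to reconstruct the estimate from first principles by writing $Z_t(iy_N)-\eta_t=i\int_0^{y_N}\partial_z Z_t(iy)\,dy$ and bounding the integrand by a Tran-style polynomial derivative estimate, in parallel with how (\ref{eqn: event 2 for B1}) is used in Proposition~\ref{prop: estimate event B1}. The integral representation is fine, and if your program were completed it would be \emph{stronger} than the paper: it would upgrade the qualitative $\epsilon_N\to0+$ to a concrete polynomial rate $\epsilon_N\lesssim N^{-c'}$, which is exactly the quantitative gap the authors acknowledge in their concluding remarks.

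The gap is in the step you outsource. You assert a uniform-in-$(t,y)$ bound $|\partial_z Z_t(iy)|\leq c\,y^{-\beta}$ on $[0,T]\times(0,y_N]$ with polynomial exceptional probability, calling it a ``reverse-flow counterpart'' of (\ref{eqn: event 2 for B1}) that ``transfers via the Rohde--Zhan time-reversal correspondence.'' That transfer is not automatic. Rohde--Zhan identifies the law of the reverse flow with the forward inverse flow at a \emph{single fixed time horizon}: for each $t$, $h_t$ agrees in law with a shifted $\tilde g_t^{-1}$. It does not identify the \emph{joint} law of the process $(\partial_z h_t(iy))_{t\in[0,T]}$ with that of $(\partial_z\tilde g_t^{-1}(iy))_{t\in[0,T]}$, and it is precisely the supremum over $t$ that you need. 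A pointwise-in-$t$ moment bound plus a chaining or Borel--Cantelli argument over a fine $t$-grid would close this, but that work is the substantive content of the cited Foster--Lyons--Margarint lemma that the paper imports wholesale; as written your proposal treats it as free. You do flag this as the ``main obstacle,'' which is the right diagnosis, but the argument does not actually close it, and it is the heart of the proposition.
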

    \begin{proof}
        Following \cite[Lemma 6.7]{Foster/Lyons/Margarint}, we find some $0<\delta<1$ such that $\sup_{0\leq t\leq T}\abs{Z_t(iy_N)-\eta_t}\leq C(\omega)y_N^{1-\delta}$, $\mathbb{P}$-a.s. for some $C(\omega)>0$. Here, this $C(\omega)$ is taken to be the minimal random positive number that admits this inequality for all $N\in\mathbb{N}_+$. Whence we have $\{C(\omega)\leq\ell_N\}\subseteq\mathcal{B}^{(2)}_N$; And by choosing $\epsilon_N\coloneqq\mathbb{P}(C(\omega)\geq\ell_N)$, the assertion is verified in light that $\ell_N\nearrow\infty$ monotonically.
    \end{proof}

    On $\mathcal{B}^{(1)}_N\cap\mathcal{B}^{(2)}_N$, there are $\sup_{0\leq t\leq T}\abs{\eta_t-\eta_{t_k(t)}}\leq \frac{2\phi(N^{1/2})}{(1-\beta)N^{(1-\beta)/2}}$ and $\sup_{0\leq t\leq T}\abs{Z_t(iy_N)-\eta_t}\leq N^{-(1-\delta)/4}$. Combing those two upper-bounds, one observes 
    \begin{equation}
        \normy{Z(iy_N)-\sum_{k<N}Z_{t_k}(iy_N)\mathbbm{1}_{[t_k,t_k+h_k)}}_T\leq\frac{2}{N^{(1-\delta)/4}} + \frac{2\phi(N^{1/2})}{(1-\beta)N^{(1-\beta)/2}},\qquad\forall~N\geq1.
    \end{equation}
    Inspecting Definition \ref{def: N-V splitting algorithm} in the context of (\ref{eqn: SDE of Z_t}), we observe that $\widetilde{Z}_{t_{k+1}}^2+2h_k = ((\widetilde{Z}_{t_k}^2-2h_k)^{1/2}+\sqrt{\kappa}B_{t_k,t_k+h_k})^2$ for each $ k<N$. Whence the Loewner evolution $\widetilde{Z}_{t_k}\mapsto\widetilde{Z}_{t_k+h_k}$ consists of two reversed Loewner flows with an intermediate parallel translation.\par
    In light of \cite[Section 2.]{Kager/Nienhuis/Kadanoff}, a constant force $t\mapsto A$ drives the Loewner chain $g_t(z)=A+( (z-A)^2 + 4t )^{1/2}$ on $[0,T]$ with its reversal $h_T(z) = A + ( (z-A)^2-4T )^{1/2}$ at time $T$. If we use $\iota^\prime_{k,1}$ and $\iota^{\prime\prime}_{k,2}$ for the reverse Loewner flows for the constant force $t\mapsto0$ on $[t_k,t_k+h_k/2]$ and respectively the constant force given by the corresponding value of the Brownian path at the final moment of $[t_k+h_k/2,t_{k+1}]$; And $\iota_{k,2}:t\mapsto t+\sqrt{\kappa}B_{t_k,t_k+h_k}$. Then $\widetilde{Z}_{t_k+h_k}(iy_N)=\iota^{\prime\prime}_{k,1}\circ\iota_{k,2}\iota^\prime_{k,1}\widetilde{Z}_{t_k}(iy_N)$ for each $k<N$.

    \begin{proposition}\label{prop: estimate event B3}
        \normalfont
        If further $\abs{\mathcal{D}_N}\leq(4N+1)^{-3}$ and if we write the perturbation event
        \[
            \mathcal{B}^{(3)}_N\coloneqq\big\{ \normy{ \sum_{k<N}\big( Z_{t_k}(iy_N) - \widetilde{Z}_{t_k}(iy_N) \big)\mathbbm{1}_{[t_k,t_k+h_k)} }_T \leq (4N+1)^{-1/2} \big\},
        \]
        then $1-\mathbb{P}(\mathcal{B}^{(3)}_N)\leq Ce^{-cN}$ for some constants $c,C>0$.
    \end{proposition}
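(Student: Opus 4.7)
The plan is to run a discrete Gronwall bound on the step-by-step error $E_k := Z_{t_k}(iy_N) - \widetilde{Z}_{t_k}(iy_N)$ on a good event where all Brownian increments $\Delta_k := \sqrt{\kappa}(B_{t_{k+1}} - B_{t_k})$ are uniformly small. Define $\mathcal{G}_N := \{\max_{k<N}|\Delta_k|\leq\sqrt{\kappa|\mathcal{D}_N|N}\}$; the Gaussian tail $\mathbb{P}(|B_h|\geq t)\leq 2e^{-t^2/(2h)}$ combined with a union bound over the $N$ time-steps gives $\mathbb{P}(\mathcal{G}_N^c)\leq 2N e^{-N/2}\leq Ce^{-cN}$, already matching the target probability decay. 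Since $|\mathcal{D}_N|\leq(4N+1)^{-3}$, on $\mathcal{G}_N$ each $|\Delta_k|\lesssim N^{-1}$, which is essential in the Taylor expansions below.

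I would then establish one-step consistency. Let $\Phi_k$ denote the exact SDE flow taking $Z_{t_k}$ to $Z_{t_{k+1}}$ and let $\widetilde{\Phi}_k = \exp(\tfrac{h_k}{2}L_0)\circ\exp(\Delta_k L_1)\circ\exp(\tfrac{h_k}{2}L_0)$ be the splitting map over one step, which is completely explicit through $\exp(sL_0)z = \sqrt{z^2-4s}$ and $\exp(uL_1)w = w+\sqrt{\kappa}u$. Expanding $\Phi_k(z)$ by Itô--Taylor in powers of $h_k$ and the iterated Brownian integrals, and expanding $\widetilde{\Phi}_k(z)$ directly via nested Taylor in $h_k$ and $\Delta_k$, I expect the two to agree to strong order one, with a pointwise remainder $|\Phi_k(z) - \widetilde{\Phi}_k(z)| \leq C(\Im z)\,(h_k^{3/2} + h_k|\Delta_k| + |\Delta_k|^3)$ on $\{\Im z\geq y_N\}$, the constant $C$ depending polynomially on $(\Im z)^{-1}$. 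On $\mathcal{G}_N$ inserting $|\Delta_k|\lesssim N^{-1}$ and $h_k\leq N^{-3}$, each local error is at most $C(y_N)\,N^{-3}$, so the cumulative local contribution after $N$ steps is $O(C(y_N)N^{-2})$.

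For the propagation part I would use that both $Z_{t_k}$ and $\widetilde{Z}_{t_k}$ have imaginary part at least $y_N$: the reverse-Loewner dynamics and its splitting discretisation preserve $\mathbb{H}$ and make $\Im$ monotone non-decreasing, as follows from the SDE $d\Im Z_t = 2\Im Z_t/|Z_t|^2\,dt$ and the analogous computation for the closed-form splitting step. Writing $\partial_z \Phi_k(z) = \exp(2\int_{t_k}^{t_{k+1}} Z_s^{-2}\,ds)$ for the exact flow, I would aim for a Koebe-type telescoping $\prod_{k<N}|\partial_z\Phi_k(Z_{t_k})|\lesssim \Im Z_T/\Im Z_0\lesssim N^{1/2}$ rather than the crude per-step bound $\exp(Ch_k y_N^{-2})$. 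With this polynomial Lipschitz amplification, the discrete Gronwall applied to the recursion $|E_{k+1}|\leq(1+r_k)|E_k|+C(y_N)(h_k^{3/2}+|\Delta_k|^3)$ with $\prod_k(1+r_k)\lesssim N^{1/2}$ yields $\max_{k\leq N}|E_k|\leq C(y_N)\,N^{1/2}\cdot N^{-2} = O(N^{-3/2})$, comfortably below $(4N+1)^{-1/2}$.

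The main obstacle, as I see it, is precisely this Lipschitz amplification: a naive per-step bound $|\partial_z\Phi_k|\leq\exp(Ch_k y_N^{-2})$ multiplies to $\exp(CTN)$ over the $N$ steps and overwhelms the polynomial local gain, because $y_N^{-2} = N$. Overcoming it requires either the Koebe telescoping sketched above, or introducing a further high-probability event on which $(\Im Z_t)^2\geq y_N^2 + ct$ so that $\sum_k h_k/(\Im Z_{t_k})^2 = O(\log N)$ and the Gronwall factor becomes polynomial in $N$. Either route should succeed thanks to the very generous mesh assumption $|\mathcal{D}_N|\leq(4N+1)^{-3}$, and the sub-exponential probability estimate $\mathbb{P}(\mathcal{G}_N^c)\leq Ce^{-cN}$ from the first paragraph carries over unchanged, yielding the claimed proposition.
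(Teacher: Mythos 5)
Your proposal takes a genuinely different route from the paper's. You set up a classical discrete-Gronwall argument: a one-step consistency estimate via Itô--Taylor expansion of the exact flow against a nested Taylor expansion of the explicit Ninomiya--Victoir step, a high-probability event $\mathcal{G}_N$ controlling the Brownian increments (which gives the claimed $Ce^{-cN}$ tail), and then propagation of the local error through the chain rule. The paper does something structurally different. It observes that the splitting at mesh points is not an approximation to a Loewner flow but \emph{is} one: composing $\exp(\tfrac{h_k}{2}L_0)\exp(\Delta_k L_1)\exp(\tfrac{h_k}{2}L_0)$ is exactly the reverse Loewner flow driven by the piecewise-constant function $\Tilde{\lambda}_t=\sum_{k}\sqrt{\kappa}B_{t_k}\mathbbm{1}_{[t_k-h_k/2,\,t_k+h_k/2)}(t)$. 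This reduces Proposition~\ref{prop: estimate event B3} to the pure comparison of two reverse Loewner flows with nearby driving functions, which is handled wholesale by the perturbation lemma of Viklund--Rohde--Wong (integrating-factor formula for $H_t=h_t-\Tilde{h}_t$, plus the classical Loewner identity $\int_0^T|\zeta(s)|\,ds\lesssim\tfrac{1}{2}\log N(4+y_N^2)$). The paper therefore never needs a per-step local truncation error or a Gronwall recursion, and the only stochastic input is $\|\sqrt{\kappa}B-\Tilde{\lambda}\|_T$, whose small-increment control reproduces your $\mathcal{G}_N$.

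There is a concrete gap in your propagation step that your own writeup flags but does not close. You invoke a Koebe-type telescoping $\prod_{k<N}|\partial_z\Phi_k(Z_{t_k})|\leq\Im Z_T/\Im Z_0\lesssim N^{1/2}$, and this inequality \emph{is} correct when the derivatives are evaluated along the exact trajectory of the exact flow (it is the classical Loewner derivative bound $|\partial_z h_T(iy_N)|\leq\exp\int_0^T 2|Z_s|^{-2}ds=\Im Z_T/\Im Z_0$). But in the Gronwall recursion $|E_{k+1}|\leq|\partial_z\Phi_k(\xi_k)||E_k|+\text{LTE}_k$ the mean-value points $\xi_k$ lie between $Z_{t_k}$ and $\widetilde{Z}_{t_k}$ and do not form an exact trajectory, so the telescoping does not literally apply; repairing this requires a bootstrap argument that the approximate trajectory stays close enough for the distortion of the derivatives to remain multiplicative-$O(1)$, which you do not supply. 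Your second fix, positing $(\Im Z_t)^2\geq y_N^2+ct$, is also not generally true pathwise: one only has $\frac{d}{dt}(\Im Z_t)^2=4(\Im Z_t)^2/|Z_t|^2\in(0,4]$, so $(\Im Z_t)^2$ is increasing but need not grow linearly. The correct identity to invoke is the logarithmic one, $\int_0^T|Z_s|^{-2}\,ds=\tfrac12\log(\Im Z_T/\Im Z_0)$, which is exactly what the paper uses (through Viklund--Rohde--Wong). With that identity in hand your scheme should go through, but the paper's route via the exact piecewise-constant Loewner representation is cleaner and avoids both the bootstrap and the one-step Itô--Taylor analysis entirely.
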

    \begin{proof}
        Define the càdlàg driving force $\Tilde{\lambda}_t\coloneqq \sum_{k<N} \sqrt{\kappa}B_{t_k} \mathbbm{1}_{[t_k-h_k/2,t_k+h_k/2)}(t) $ on $[0,T]$. Viewed as a coarse interpolation to the trace $\sqrt{\kappa}B[0,T]$, $(\Tilde{\lambda}_t)_{t\in[0,T]}$ initiates the Loewner flow $(\widetilde{Z}^\dagger_t(iy_N))_{t\in[0,T]}$ with the form
        \begin{equation}\label{eqn: SDE of cadlag Z tilde}
            \widetilde{Z}^\dagger_t(iy_N) = \Tilde{h}_t(iy_N)-\Tilde{\lambda}_t,\qquad\text{with}\quad \partial_t \Tilde{h}_t(z) = \frac{-2}{\Tilde{h}_t(z)- (\Tilde{\lambda}_{T-t}-\Tilde{\lambda}_T) },\qquad\forall~t\in[0,T].
        \end{equation}
        The above observation compels us to estimate
        \begin{equation}\label{eqn: difference between Z and Z tilde}
            \big| Z_t(iy_N) - \widetilde{Z}^\dagger_t(iy_N) \big| \leq \big| h_t(iy_N) - \Tilde{h}_t(iy_N) \big| + \sup_{0\leq t\leq T}\big| \sqrt{\kappa}B_t - \Tilde{\lambda}_t \big|.
        \end{equation}
        To this end, we write $H_t\coloneqq h_t(iy_N) - \Tilde{h}_t(iy_N)$ for all $t\in[0,T]$. Invoking (\ref{eqn: SDE of Z_t}) and (\ref{eqn: SDE of cadlag Z tilde}), we get
        \[
            \frac{d}{dt}H_t - H_t\zeta(t) = \big(\sqrt{\kappa}B_t - (\Tilde{\lambda}_{T-t}-\Tilde{\lambda}_T) \big)\zeta(t),\quad \zeta(t)\coloneqq \big( h_t(iy_N) - \sqrt{\kappa}(B_{T-t}-B_T) \big)^{-1} \big( \Tilde{h}_t(iy_N) - (\Tilde{\lambda}_{T-t}-\Tilde{\lambda}_T) \big)^{-1}.
        \]
        If we integrate the above differential, we get
        \[
            H_t u(t) = H_0 - \int_0^t (\sqrt{\kappa}B_{T-s}-\Tilde{\lambda}_{T-s}+\Tilde{\lambda}_T-\sqrt{\kappa}B_T) u(s)\zeta(s)\,ds,\qquad\text{with}\quad u(t)\coloneqq e^{-\int_0^t \zeta(s)\,ds },\qquad H_0=0.
        \]
        Henceforth, we obtain
        \begin{equation*}\begin{aligned}
            &\big| h_t(iy_N) - \Tilde{h}_t(iy_N) \big| \leq \int_0^t | \sqrt{\kappa}B_{T-s}-\Tilde{\lambda}_{T-s}+\Tilde{\lambda}_T-\sqrt{\kappa}B_T | e^{\int_s^t\Re\zeta(\tau)\,d\tau}|\zeta(s)|\,ds\\
            &\qquad\leq \sup_{0\leq\tau\leq T} |\sqrt{\kappa}B_{\tau}-\Tilde{\lambda}_{\tau}| \int_0^t e^{\int_s^t\Re\zeta(r)\,dr}|\zeta(s)|\,ds \leq \sup_{0\leq\tau\leq T} |\sqrt{\kappa}B_{\tau}-\Tilde{\lambda}_{\tau}| \big( e^{\int_s^t|\zeta(r)|\,dr}-1 \big),
        \end{aligned}\end{equation*}
        where the last inequality is due to \cite[Lemma 2.3, Eqn. (2.12)]{Viklund/Rohde/Wong}. Furthermore, \cite[Eqn. (2.12)]{Viklund/Rohde/Wong} yields $\int_0^t\abs{\zeta(s)}\,ds$ is less than or equal to $\frac{1}{2}\log N(4+y_N^2)$. Thus, looking back to (\ref{eqn: difference between Z and Z tilde}), we have 
        \begin{equation}\label{eqn: further difference}
            \big|Z_t(iy_N)-\widetilde{Z}^\dagger_t(iy_N)\big| \leq N^{1/2}(4+y_N^2)^{1/2} \sup_{0\leq t\leq T} |\sqrt{\kappa}B_t-\Tilde{\lambda}_t| \leq (4N+1)^{1/2} \sup_{0\leq t\leq T} |\sqrt{\kappa}B_t-\Tilde{\lambda}_t|.
        \end{equation}
        Now notice that $\normx{\sqrt{\kappa}B -\Tilde{\lambda} }_T\leq \sup_{0\leq k<N}\normx{\sqrt{\kappa}B_{t_k+\vdot}}_{h_k}$ and that by \cite[Corollary 2.2]{Boukai} we have $\mathbb{P}(S_t\leq x)=2\Phi(t^{-1/2}x)$ where $S_t\coloneqq\sup_{0\leq s\leq t}B_s$ and $\Phi\sim\mathcal{N}(0,1)$ is the cumulant function of a standard Gaussian. Via reflection principle,
        \[
            \mathbb{P}\big( \normx{\sqrt{\kappa}B}_{h_k}\geq (4N+1)^{-1} \big) \leq 2\mathbb{P}\big( S_{h_k}\geq \kappa^{-1/2}(4N+1)^{-1} \big)\leq 2(2/\pi)^{1/2}\exp\big( -\tfrac{1}{2\kappa}h_k^{-1}(4N+1)^{-2} \big).
        \]
        Now that we have chosen $\abs{\mathcal{D}_N}\leq(4N+1)^{-3}$, then on the event $\{\normx{\sqrt{\kappa}B-\Tilde{\lambda}}_T\geq(4N+1)^{-1}\}$, we know $\normx{\sqrt{\kappa}B_{t_k+\vdot}}_{h_k/2}\geq(4N+1)^{-1}$ for some $0\leq k<N$. Hence,
        \[
            \mathbb{P}\big(\normx{\sqrt{\kappa}B-\Tilde{\lambda}}_T\geq(4N+1)^{-1}\big)\leq \sum_{k<N} \mathbb{P}\big(\normx{\sqrt{\kappa}B_{t_k+\vdot}}_{h_k/2}\geq(4N+1)^{-1}\big) \leq 2T(4N+1)^3 e^{-(4N+1)/2\kappa}.
        \]
        Observe that on the complement event of $\{\normx{\sqrt{\kappa}B-\Tilde{\lambda}}_T\geq(4N+1)^{-1}\}$, in light of (\ref{eqn: further difference}), we get $\normx{Z(iy_N)-\widetilde{Z}^\dagger(iy_N)}_T\leq(4N+1)^{-1/2}$ with the probability of this occurrence at least $1 - 2T(4N+1)^3 e^{-(4N+1)/2\kappa}$. Since the splitting output $(\widetilde{Z}_t(iy_N))_{t\in[0,T]}$ coincides with the trace $\widetilde{Z}^\dagger(iy_N)$ at each $t_k\in\mathcal{D}_N$, the assertion is then verified.
    \end{proof}

    Following the Definition \ref{def: N-V splitting algorithm} in the context of SLE, at each $N$th step we could compute $\widetilde{Z}^{(0)}_t=\text{exp}((t-t_k)L_0/2)\widetilde{Z}_{t_k}=(\widetilde{Z}_{t_k}^2-2(t-t_k))^{1/2}$, $\widetilde{Z}_t^{(1)}=\text{exp}(B_{t_k,t}L_1)=(\widetilde{Z}_{t_k}^2-2h_k)^{1/2}+B_{t_k,t}$ and $\widetilde{Z}_t^{(2)}=\text{exp}((t-t_k)L_0/2)\widetilde{Z}^{(1)}_{t_{k+1}}=((\widetilde{Z}^{(1)}_{t_{k+1}})^2-2(t-t_k))^{1/2}$ for any $t_k\leq t<t_k+h_k$ and $0\leq k<N$. Written out in integral form, we get
    \begin{equation}\begin{aligned}\label{eqn: exact from of difference}
        &\widetilde{Z}_t(iy_N)-\widetilde{Z}_{t_k(t)}(iy_N) = \frac{1}{2} \int_{t_k(t)}^t L_0(\widetilde{Z}_s^{(2)}) \,ds + \int_{t_k(t)}^t L_1(\widetilde{Z}_s^{(1)}) \,dB_s +\ frac{1}{2} \int_{t_k(t)}^t L_0(\widetilde{Z}_s^{(0)}) \,ds,\\
        &\qquad = \int_{t_k}^t \sqrt{\kappa} \,dB_s - \int_{t_k}^t \big((\widetilde{Z}_{t_{k}})^2-2(s-t_k)\big)^{-1/2}\,ds - \int_{t_k}^t \big(\big(\widetilde{Z}_{t_{k+1}}^{(1)}\big)^2-2(s-t_k)\big)^{-1/2}\,ds,\qquad\forall~t\in[0,T],
    \end{aligned}\end{equation}
    and can be further expanded if we replace $\widetilde{Z}^{(1)}_{t_{k+1}}$ in terms of $\widetilde{Z}_{t_k}$. This exact form of difference then leads us to the following result.

    \begin{proposition}\label{prop: estimate event B4}
        \normalfont
        Let $\kappa\neq0$ and $\abs{\mathcal{D}_N}\leq(4N+1)^{-3}$. If we write the event
        \[
            \mathcal{B}^{(4)}_N\coloneqq\big\{ \normy{\widetilde{Z}(iy_N)-\sum_{k<N} \widetilde{Z}_{t_k}(iy_N)\mathbbm{1}_{[t_k,t_k+h_k)}}_T \leq 2N^{-1/2}+4N^{-1/4} \big\},
        \]
        then $1-\mathbb{P}(\mathcal{B}^{(4)}_N)\leq Ce^{-cN^{1/2}}$ for some constants $c,C>0$.
    \end{proposition}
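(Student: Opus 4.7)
The plan is to work directly from the explicit identity displayed as equation (\ref{eqn: exact from of difference}) immediately above the statement: on each mesh interval $[t_k, t_{k+1})$,
\[
\widetilde{Z}_t(iy_N) - \widetilde{Z}_{t_k(t)}(iy_N) \;=\; \sqrt{\kappa}\bigl(B_t - B_{t_k}\bigr) \;-\; I_1(t) \;-\; I_2(t),
\]
where $I_1(t) = \int_{t_k}^t (\widetilde{Z}_{t_k}^2 - 2(s-t_k))^{-1/2}\,ds$ and $I_2(t) = \int_{t_k}^t ((\widetilde{Z}^{(1)}_{t_{k+1}})^2 - 2(s-t_k))^{-1/2}\,ds$. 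I would bound the three summands separately and then take supremum over $k<N$ and $t \in [t_k, t_{k+1})$: only the Brownian piece will contribute to the exceptional event, whereas both ODE integrals can be controlled deterministically.

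The key deterministic input is the monotonicity bound $\Im \widetilde{Z}_{t_k},\, \Im \widetilde{Z}^{(0)}_{t},\, \Im \widetilde{Z}^{(1)}_{t_{k+1}},\, \Im \widetilde{Z}^{(2)}_{t} \geq y_N = N^{-1/2}$ for every $k < N$ and $t \in [t_k, t_{k+1})$. This is because the half-step $L_0/2$ flow $\dot z = -1/z$ satisfies $\dot y = y/|z|^2 > 0$ and hence strictly increases imaginary parts, while the intermediate step $\widetilde{Z}^{(1)}_{t_{k+1}} = \widetilde{Z}^{(0)}_{t_{k+1}} + \sqrt{\kappa}B_{t_k,t_{k+1}}$ is a purely real translation; starting from $\widetilde{Z}_0 = iy_N$, the lower bound propagates through all $N$ iterates. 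Combining the antiderivative $\int_0^h (z^2 - 2r)^{-1/2}\,dr = z - (z^2 - 2h)^{1/2}$ with the algebraic identity $z - (z^2-2h)^{1/2} = 2h/(z + (z^2-2h)^{1/2})$ and the bound $|z + (z^2-2h)^{1/2}| \geq \Im z + \Im(z^2-2h)^{1/2} \geq 2y_N$, one obtains
\[
|I_1(t)|,\; |I_2(t)| \;\leq\; h_k/y_N \;\leq\; (4N+1)^{-3}\,N^{1/2} \;=\; O(N^{-5/2}),
\]
which is deterministic and safely inside the claimed $4N^{-1/4}$ budget.

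For the Brownian piece I would mimic the reflection-principle argument used in the proof of Proposition \ref{prop: estimate event B3}: for each $k$ one has $\mathbb{P}(\|B_{t_k+\cdot}\|_{h_k} > N^{-1/2}/\sqrt{\kappa}) \leq 4\exp(-1/(2\kappa h_k N))$, and union-bounding over the $N$ mesh intervals with $h_k \leq (4N+1)^{-3}$ yields
\[
\mathbb{P}\Bigl(\sqrt{\kappa}\sup_{k<N}\|B_{t_k+\cdot}\|_{h_k} > N^{-1/2}\Bigr) \;\leq\; 4N\exp\Bigl(-\tfrac{(4N+1)^3}{2\kappa N}\Bigr),
\]
which is dominated by $Ce^{-cN^{1/2}}$ for large $N$ and appropriate $C, c > 0$. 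On the complement, $|\sqrt{\kappa}(B_t - B_{t_k})| \leq N^{-1/2}$ for every $t \in [0,T]$. Adding the deterministic ODE contribution yields a total deviation at most $N^{-1/2} + O(N^{-5/2})$, comfortably within $2N^{-1/2} + 4N^{-1/4}$.

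The main obstacle, as I see it, is the deterministic step verifying $\Im \geq y_N$ at \emph{every} stage of the splitting, that is, not only at the iterates $\widetilde{Z}_{t_k}$ but also along the intermediate processes $\widetilde{Z}^{(0)}$, $\widetilde{Z}^{(1)}$, $\widetilde{Z}^{(2)}$. This geometric fact is what prevents the integrands $1/\widetilde{Z}^{(i)}$ from blowing up and relegates the $L_0$-pieces to a negligible correction. Once the monotonicity is in place, the ODE bound collapses to a one-line antiderivative and the stochastic bound becomes a textbook reflection-principle calculation.
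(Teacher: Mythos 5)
Your proposal follows essentially the same route as the paper: both bound the drift integrals deterministically via the lower bound $\Im \widetilde{Z}^{(i)} \geq y_N$ (which the paper takes from the monotonicity result $\Im z \leq \Im(z^2-A)^{1/2}$ of Foster--Lyons--Margarint together with Kemppainen's Lemma 4.9) to get the $2h_k/y_N$ contribution, and then handle the Brownian increment by a reflection-principle tail bound. The only cosmetic difference is the threshold for the Brownian piece: you use $N^{-1/2}$ while the paper uses $N^{-1/4}$, but both give tail decay far exceeding the claimed $Ce^{-cN^{1/2}}$.
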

    \begin{proof}
        A general result $\Im z\leq\Im (z^2-A)^{1/2}$, $\Im z=\Im z+A$ for all $z\in\mathbb{H}$ and $A\in\mathbb{P}$ follows from \cite[Section 6.1]{Foster/Lyons/Margarint}. Thus, from (\ref{eqn: exact from of difference}) we have
        \[
            \big| \widetilde{Z}_t(iy_N) - \widetilde{Z}_{t_k}(iy_N) \big| \leq |\sqrt{\kappa}B_{t_k,t}| + \frac{2h_k}{\Im \widetilde{Z}_{t_k}(iy_N)} \leq |\sqrt{\kappa}B_{t_k,t}| + 2y_N^{-1}h_k,
        \]
        where the last inequality is due to \cite[Lemma 4.9]{Kemppainen}, which says that the map $t\in[0,T]\mapsto\Im \widetilde{Z}_t(iy_N)$ is increasing. In light of \cite[Corollary 2.2]{Boukai}, we get
        \[
            \mathbb{P}\big( \normx{\sqrt{\kappa}B}_{h_k}\geq N^{-1/4} \big) \leq 2\mathbb{P}\big( S_{h_k}\geq\kappa^{-1/2}N^{-1/4} \big) \leq 2(2/\pi)^{1/2}\exp\big(\tfrac{1}{2\kappa}h_k^{-1}N^{-1/2}\big),\qquad\forall~N\geq1,
        \]
        by the reflection principle. Since $\abs{\mathcal{D}_N}\leq (4N+1)^{-3}$, the assertion then follows.
    \end{proof}
    \begin{proof}[Proof of Theorem \ref{thm: sup-norm convergence}.]
        Combining Propositions \ref{prop: estimate event B1}, \ref{prop: estimate event B2}, \ref{prop: estimate event B3}, \ref{prop: estimate event B4} and on the event $\mathcal{B}^{(1)}_N \cap \mathcal{B}^{(2)}_N \cap \mathcal{B}^{(3)}_N \cap \mathcal{B}^{(4)}_N$, with
        \[
            \varphi_1(N)\coloneqq 3N^{-(1-\delta)/4} + 2(1-\beta)^{-1}\phi(N^{1/2})N^{-(1-\beta)/2} + (4N+1)^{-1/2} + 2N^{-1/2} + 4N^{-1/4},
        \]
        given $\abs{\mathcal{D}_N}\leq(4N+1)^{-3}$, we have
        \begin{equation*}\begin{aligned}
            &\normy{\eta-\widetilde{Z}(iy_N)}_T \leq \normy{\eta-Z(iy_N)}_T + \normy{Z(iy_N)-\sum_{k<N}Z_{t_k}(iy_N)\mathbbm{1}_{[t_k,t_k+h_k)}}_T\\
            &\qquad+ \normy{\sum_{k<N} \big(Z_{t_k}(iy_N)-\widetilde{Z}_{t_k}(iy_N)\big)\mathbbm{1}_{[t_k,t_k+h_k)} }_T + \normy{\sum_{k<N} \widetilde{Z}_{t_k}(iy_N)\mathbbm{1}_{[t_k,t_k+h_k)} - \widetilde{Z}(iy_N) }_T \leq \varphi_1(N).
        \end{aligned}\end{equation*}
        Choose $\varphi_2(N)\coloneqq \epsilon_N + CN^{-c} + C^\prime e^{-c^\prime N^{1/2}}$ for suitable constants $c,c^\prime,C,C^\prime>0$ determined in Propositions \ref{prop: estimate event B1}, \ref{prop: estimate event B2}, \ref{prop: estimate event B3}, \ref{prop: estimate event B4}. We then have $1-\mathbb{P}(\mathcal{B}^{(1)}_N \cap \mathcal{B}^{(2)}_N \cap \mathcal{B}^{(3)}_N \cap \mathcal{B}^{(4)}_N)\leq\varphi_2(N)\to0+$ as the step $N$ advances, verifying the claim.
    \end{proof}

\subsection{Insights on a version of $L^p$-norm convergence.}
    Our next journey is to view the splitting algorithm through the lens of another metric perspective, namely, the $L^p$-norm topology. The proof of Theorem \ref{thm: sup-norm convergence} relies on the following propositions.

    \begin{proposition}\label{prop: estimate A1}
        \normalfont
        We can find an event $\mathcal{A}^{(1)}_N$ and two decreasing sequences $\Tilde{\psi}^{(1)}_N\to0+$, $\Hat{\psi}^{(1)}_N\to0+$ as $N\to\infty$ so that
        \[
            \mathbb{E}\bigg[\int_0^T \big|\eta_t -Z_t(iy_N)\big|^p\,dt\vdot I_{\mathcal{A}_N^{(1)}}\bigg]\leq\Tilde{\psi}^{(1)}_N\qquad\text{with}\quad\mathbb{P}(\mathcal{A}_N^{(1)})\geq1-\Hat{\psi}^{(1)}_N.
        \]
    \end{proposition}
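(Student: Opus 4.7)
The plan is to piggyback on the almost-sure sup-norm approximation already exploited in Proposition \ref{prop: estimate event B2}, but now convert it into an $L^p$ statement by truncating on the pathwise constant $C(\omega)$ at an appropriate threshold. Concretely, by \cite[Lemma 6.7]{Foster/Lyons/Margarint} there exists $0<\delta<1$ and a random $C(\omega)<\infty$ a.s.\ such that $\sup_{0\leq t\leq T}|Z_t(iy_N)-\eta_t|\leq C(\omega)\,y_N^{1-\delta}$ for every $N\geq1$; taking $C(\omega)$ minimal with this property gives a well-defined nonnegative random variable.

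First, I would pick a growing threshold $\ell_N\nearrow\infty$ and define $\mathcal{A}^{(1)}_N\coloneqq\{C(\omega)\leq\ell_N\}$. On this event the sup-norm bound $\sup_{0\leq t\leq T}|Z_t(iy_N)-\eta_t|\leq \ell_N y_N^{1-\delta}$ holds deterministically, so integrating the $p$th power over $[0,T]$ yields
\begin{equation*}
\int_0^T \big|\eta_t-Z_t(iy_N)\big|^p\,dt\cdot I_{\mathcal{A}^{(1)}_N} \;\leq\; T\,\ell_N^{\,p}\,y_N^{\,p(1-\delta)},
\end{equation*}
after which the expectation becomes a deterministic quantity, and I may define $\tilde\psi^{(1)}_N\coloneqq T\,\ell_N^{\,p}\,y_N^{\,p(1-\delta)}$. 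For the complement, since $C(\omega)<\infty$ a.s.\ and $\ell_N\to\infty$, one has $\hat\psi^{(1)}_N\coloneqq\mathbb{P}(C(\omega)>\ell_N)\to0+$ by monotone convergence of the tails.

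The only real choice is to balance $\ell_N$ against $y_N=N^{-1/2}$. Mimicking the choice $\ell_N=N^{(1-\delta)/4}$ from Proposition \ref{prop: estimate event B2} gives $\tilde\psi^{(1)}_N=T N^{-p(1-\delta)/4}\to 0+$ monotonically, and one still has $\hat\psi^{(1)}_N\to 0+$. Both sequences are manifestly decreasing (after a harmless modification at small $N$), which is what is required.

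The only mildly delicate point is the tail decay of $C(\omega)$: the cited lemma only gives $C(\omega)<\infty$ a.s., so without additional work $\hat\psi^{(1)}_N$ cannot be made polynomial or exponential. This is the reason the proposition is stated with an unspecified $\hat\psi^{(1)}_N$. If a quantitative rate is needed later on, one could instead invoke the moment or concentration estimates available for the derivative of $\Tilde g_t^{-1}$ (as used in (\ref{eqn: event 2 for B1})) to control $\mathbb{P}(C(\omega)>\ell_N)$ explicitly; but for the qualitative statement of Proposition \ref{prop: estimate A1} the soft argument above suffices.
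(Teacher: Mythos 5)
Your proposal is correct and follows essentially the same route as the paper: invoke \cite[Lemma 6.7]{Foster/Lyons/Margarint} to get the a.s.\ sup-norm bound $\sup_{0\le t\le T}|\eta_t-Z_t(iy_N)|\le C(\omega)y_N^{1-\delta}$, truncate on $\mathcal{A}_N^{(1)}=\{C(\omega)\le N^{(1-\delta)/4}\}$, and read off $\tilde\psi_N^{(1)}$ and $\hat\psi_N^{(1)}$ exactly as the paper does. The only cosmetic difference is that you keep the factor of $T$ in $\tilde\psi_N^{(1)}$, which the paper silently drops (implicitly taking $T\le1$); your closing remark on why $\hat\psi_N^{(1)}$ cannot be made quantitative without further tail control of $C(\omega)$ is also consistent with the paper's later discussion of convergence rates.
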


    \begin{proof}
        By \cite[Lemma 6.7]{Foster/Lyons/Margarint} we find $0<\delta<1$ with $\sup_{0\leq t\leq T}|\eta_t-Z_t(iy_N)|\leq C(\omega)N^{-(1-\delta)/2}$, $\mathbb{P}$-a.s. Take the event $\mathcal{A}^{(1)}_N\coloneqq\{C(\omega)\leq N^{(1-\delta)/4} \}$. Then $\Hat{\psi}^{(1)}_N\coloneqq\mathbb{P}(C(\omega)\geq N^{(1-\delta)/4} )\to0+$ and 
        \[
            \mathbb{E}\big[\int_0^T|\eta_t-Z_t(iy_N)|^p\,dt\vdot I_{\mathcal{A}^{(1)}_N}\big] \leq \mathbb{E}\big[ \normx{\eta-Z(iy_N)}_T^p,\, \mathcal{A}^{(1)}_N \big] \leq N^{-p(1-\delta)/4} \coloneqq\Tilde{\psi}^{(1)}_N\to0.
        \]
        And the assertion is verified.
    \end{proof}

    \begin{proposition}\label{prop: estimate A2}
        \normalfont
        We can find an event $\mathcal{A}^{(2)}_N$ and two decreasing sequences $\Tilde{\psi}^{(2)}_N\to0+$, $\Hat{\psi}^{(2)}_N\to0+$ as $N\to\infty$ so that
        \[
            \mathbb{E} \bigg[ \int_0^T \big| Z_t(iy_N) -Z_{t_k(t)}(iy_N)\big|^{p}\,dt \vdot I_{\mathcal{A}_N^{(2)}} \bigg] \leq \Tilde{\psi}^{(2)}_N\qquad\text{with}\quad\mathbb{P}(\mathcal{A}_N^{(2)})\geq1-\Hat{\psi}^{(2)}_N.
        \]
    \end{proposition}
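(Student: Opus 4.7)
The plan is to directly estimate the increment via the SDE (\ref{eqn: SDE of Z_t}), which yields the decomposition
\[
    Z_t(iy_N) - Z_{t_k(t)}(iy_N) = -\int_{t_k(t)}^{t}\frac{2}{Z_s(iy_N)}\,ds + \sqrt{\kappa}\bigl(B_t - B_{t_k(t)}\bigr),
\]
and to bound each piece separately. The crucial observation for the drift is that, by \cite[Lemma 4.9]{Kemppainen} as already invoked in the proof of Proposition \ref{prop: estimate event B4}, the map $s\mapsto\Im Z_s(iy_N)$ is monotonically increasing, whence $|Z_s(iy_N)|\geq\Im Z_s(iy_N)\geq y_N=N^{-1/2}$ uniformly on $[0,T]$. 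This delivers the deterministic pointwise bound
\[
    \bigg|\int_{t_k(t)}^{t}\frac{2}{Z_s(iy_N)}\,ds\bigg|\leq\frac{2|\mathcal{D}_N|}{y_N}=2N^{1/2}|\mathcal{D}_N|.
\]

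For the Brownian piece, standard Gaussian moment estimates give $\mathbb{E}\bigl[|B_t-B_{t_k(t)}|^p\bigr]\leq C_p|t-t_k(t)|^{p/2}\leq C_p|\mathcal{D}_N|^{p/2}$ with $C_p=\mathbb{E}[|\mathcal{N}(0,1)|^p]$. Applying the elementary inequality $|a+b|^p\leq 2^{p-1}(|a|^p+|b|^p)$, combining the two bounds, integrating in $t$ over $[0,T]$, and using Fubini's theorem, I arrive at
\[
    \mathbb{E}\bigg[\int_0^T\big|Z_t(iy_N)-Z_{t_k(t)}(iy_N)\big|^p\,dt\bigg]\leq 2^{p-1}T\bigl(2N^{1/2}|\mathcal{D}_N|\bigr)^p+2^{p-1}\kappa^{p/2}C_pT|\mathcal{D}_N|^{p/2}.
\]
Under the rate hypothesis $|\mathcal{D}_N|=o(N^{-3})$ from Theorem \ref{thm: Lp-norm convergence}, the first term is $o(N^{-5p/2})$ and the second is $o(N^{-3p/2})$, so their sum defines a decreasing $\Tilde{\psi}^{(2)}_N\to 0+$.

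Since the drift bound is deterministic and no exceptional event needs to be excluded, I would simply choose $\mathcal{A}^{(2)}_N=\Omega$ and set $\Hat{\psi}^{(2)}_N\equiv 0$, or, for cleaner interoperability with Proposition \ref{prop: estimate A1} at the combination step of Theorem \ref{thm: Lp-norm convergence}, take $\mathcal{A}^{(2)}_N=\mathcal{A}^{(1)}_N$ with the same $\Hat{\psi}^{(1)}_N$. The main obstacle, such as it is, lies entirely in noticing that the monotonicity of the imaginary part of the reverse Loewner flow furnishes the lower bound $|Z_s(iy_N)|\geq y_N$ required to tame the otherwise singular drift $2/Z_s$; everything else reduces to routine moment estimates. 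A subtler point worth double-checking is that we truly do not need to stop the flow at a hitting time of some small $\epsilon$, since the deterministic bound from Kemppainen's lemma holds pathwise for all $s\in[0,T]$ and $N\geq 1$, which is exactly what keeps the computation clean and allows taking $\mathcal{A}^{(2)}_N$ of full probability.
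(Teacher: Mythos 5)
Your proof is correct, and it takes a genuinely different route from the paper's. The paper's proof goes through the trace $\eta$: it first invokes the Viklund--Lawler optimal H\"older exponent to get $\sup_t|\eta_t-\eta_{t_k(t)}|\leq C'(\omega)N^{-(1-\beta)/2}$, then uses the Foster--Lyons--Margarint bound $\sup_t|Z_t(iy_N)-\eta_t|\leq C(\omega)N^{-(1-\delta)/2}$ twice in a triangle inequality to transfer this to $|Z_t-Z_{t_k(t)}|$; controlling the two random constants $C(\omega),C'(\omega)$ then forces a nontrivial exceptional event $\mathcal{A}^{(2)}_N\subseteq\mathcal{A}^{(1)}_N$ with a non-zero failure probability $\Hat\psi^{(2)}_N$. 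You instead estimate the reverse-flow SDE increment directly, exploiting the pathwise monotonicity of $\Im Z_s(iy_N)$ to get the deterministic drift bound $|2/Z_s|\leq 2/y_N$ and routine Gaussian moments for the Brownian part. Your argument is more elementary, genuinely needs no exceptional event, and---notably---does not rely on the H\"older continuity of the SLE trace, so it does not use the $\kappa\neq 8$ hypothesis for this proposition. The paper's route buys a pathwise $\sup$-norm bound on $|Z_t-Z_{t_k(t)}|$ (which parallels the structure of the sup-norm proof in Section 3.1), whereas your route only controls the $L^p(dt)$ integral---but that is exactly what the statement asks for. One small technicality: the explicit bound you obtain is not literally monotone in $N$, so you should replace $\Tilde\psi^{(2)}_N$ by, say, $\sup_{M\geq N}\Tilde\psi^{(2)}_M$ to meet the letter of the ``decreasing sequence'' requirement; and if you take $\Hat\psi^{(2)}_N\equiv 0$ you should note that ``$\to 0+$'' is then to be read as weak convergence to zero, or simply adopt your alternative choice $\mathcal{A}^{(2)}_N=\mathcal{A}^{(1)}_N$.
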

    \begin{proof}
        Invoking \cite[Propositions 3.2, 4.8]{Viklund/Lawler} according to Ref. \cite{Viklund/Lawler} , we can find $0<\beta<1$ with $\sup_{0\leq t\leq T}|\eta_t-\eta_{t_k(t)}|\leq C^\prime(\omega)N^{-(1-\beta)/2}$, $\mathbb{P}$-a.s. It follows from \cite[Lemma 6.7]{Foster/Lyons/Margarint} that $\sup_{0\leq t\leq T}|Z_t(iy_N)-Z_{t_k(t)}(iy_N)|\leq2C(\omega)N^{-(1-\delta)/2}+C^\prime(\omega)N^{-(1-\beta)/2}$. Now we take $\mathcal{A}^{(2)}_N\coloneqq\mathcal{A}^{(1)}_N\cap\{C^\prime(\omega)\leq N^{(1-\beta)/4}\}$. Then $\Hat{\psi}^{(2)}_N\coloneqq\Hat{\psi}^{(1)}_N+\mathbb{P}(C^\prime(\omega)\geq N^{(1-\beta)/4})\to0+$ monotonically and
        \begin{equation*}\begin{aligned}
            &\mathbb{E} \big[ \int_0^T|Z_t(iy_N)-Z_{t_k(t)}(iy_N)|^{p}\,dt \vdot I_{\mathcal{A}_N^{(2)}} \big] \leq \mathbb{E} \big[ \normx{Z(iy_N) - \sum_{k<N} Z_{t_k}(iy_N)\mathbbm{1}_{[t_k,t_k+h_k)} }_{T}^{p},\, \mathcal{A}_n^{(2)} \big]\\ 
            &\qquad \leq 2^{2p-1} N^{-p(1-\delta)/4} + 2^{p-1} N^{-p(1-\beta)/4} \coloneqq\Tilde{\psi}^{(2)}_N\to0.
        \end{aligned}\end{equation*}
        And the assertion is verified.            
    \end{proof}

    \begin{proposition}\label{prop: estimate A3}
        \normalfont
        If $\abs{\mathcal{D}_N}\leq(4N+1)^{-3}$, we can find an event $\mathcal{A}^{(3)}_N$ and two decreasing sequences $\Tilde{\psi}^{(3)}_N\to0+$ and $\Hat{\psi}^{(3)}_N\to0+$ as $N\to\infty$ so that
        \[
            \mathbb{E} \bigg[ \int_0^T \big| Z_{t_k(t)}(iy_N)-\widetilde{Z}_{t_k(t)}(iy_N)\big|^p\,dt\vdot I_{\mathcal{A}_N^{(3)}} \bigg] \leq \Tilde{\psi}^{(3)}_N\qquad\text{with}\quad\mathbb{P}(\mathcal{A}_N^{(3)})\geq1-\Hat{\psi}^{(3)}_N.
        \]
    \end{proposition}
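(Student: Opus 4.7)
The approach I would take is to observe that the integrand here depends only on the values of $Z$ and $\widetilde{Z}$ at the mesh points $t_k \in \mathcal{D}_N$, so the sup-norm control already extracted in Proposition \ref{prop: estimate event B3} is essentially all that is needed. Concretely, I would set $\mathcal{A}^{(3)}_N \coloneqq \mathcal{B}^{(3)}_N$, whence the mesh hypothesis $\abs{\mathcal{D}_N}\leq(4N+1)^{-3}$ inherited from that proposition yields $\mathbb{P}(\mathcal{A}^{(3)}_N)\geq 1-Ce^{-cN}$, and I would define $\Hat{\psi}^{(3)}_N\coloneqq Ce^{-cN}$, which is monotonically decreasing to $0$.

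On the event $\mathcal{A}^{(3)}_N$, the very definition of $\mathcal{B}^{(3)}_N$ furnishes
\[
    \sup_{0\leq t\leq T}\big| Z_{t_k(t)}(iy_N)-\widetilde{Z}_{t_k(t)}(iy_N)\big| \leq (4N+1)^{-1/2},
\]
since the step function $t\mapsto \sum_{k<N}(Z_{t_k}(iy_N)-\widetilde{Z}_{t_k}(iy_N))\mathbbm{1}_{[t_k,t_k+h_k)}(t)$ appears there directly. Raising to the $p$th power and integrating over $[0,T]$ immediately yields
\[
    \mathbb{E}\bigg[\int_0^T \big| Z_{t_k(t)}(iy_N)-\widetilde{Z}_{t_k(t)}(iy_N)\big|^p\,dt\vdot I_{\mathcal{A}^{(3)}_N}\bigg] \leq T(4N+1)^{-p/2} \eqqcolon \Tilde{\psi}^{(3)}_N,
\]
which is monotonically decreasing to $0$ for each fixed $p\geq 2$. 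Both sequences satisfy the assertion.

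In this sense the present proposition carries no essentially new difficulty: the hard work of comparing $Z(iy_N)$ with the discrete splitting output was already absorbed into Proposition \ref{prop: estimate event B3}, where one propagates the càdlàg approximation $\Tilde{\lambda}_t$ through the reverse Loewner flow via the Gr\"onwall-type estimate of Viklund--Rohde--Wong and controls the driving-force error by a reflection-principle bound applied on cells of diameter at most $(4N+1)^{-3}$. The main obstacle, if any, would be the minor bookkeeping check that the $p$-dependent constant from raising a uniform bound to the $p$th power does not interfere with the monotonicity of $\Tilde{\psi}^{(3)}_N$; since $T$ is fixed and $p\geq 2$ is fixed, $(4N+1)^{-p/2}$ decays at least as fast as $(4N+1)^{-1}$, and no further refinement of the mesh beyond $\abs{\mathcal{D}_N}\leq(4N+1)^{-3}$ is required.
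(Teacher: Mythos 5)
Your proposal is correct and follows essentially the same route as the paper: set $\mathcal{A}^{(3)}_N\coloneqq\mathcal{B}^{(3)}_N$, read off the sup-norm bound $(4N+1)^{-1/2}$ from Proposition~\ref{prop: estimate event B3}, then raise to the $p$th power and integrate. Your version even correctly retains the factor $T$ in $\Tilde{\psi}^{(3)}_N$, which the paper's displayed bound omits.
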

    \begin{proof}
        Invoking Proposition $\ref{prop: estimate event B3}$ and taking $\mathcal{A}^{(3)}_N\coloneqq\mathcal{B}^{(3)}_N$, we can choose $\Hat{\psi}^{(3)}_N\coloneqq2e^{-\kappa^{-1}(4N+1)}\to0+$. Furthermore,
        \begin{equation*}\begin{aligned}
            &\mathbb{E} \big[ \int_0^T |Z_{t_k(t)}(iy_N)-\widetilde{Z}_{t_k(t)}(iy_N)|^p\,dt\vdot I_{\mathcal{A}_n^{(3)}}\big] \leq \mathbb{E} \big[ \normx{\sum_{k<N} (Z_{t_k}(iy_N)-\widetilde{Z}_{t_k}(iy_N))\mathbbm{1}_{[t_k,t_k+h_k)} }_T^p,\, \mathcal{A}_N^{(3)} \big]\\
            &\qquad \leq (4N+1)^{-p/2} \coloneqq\Tilde{\psi}^{(3)}_N\to0.
        \end{aligned}\end{equation*}
        And the assertion is verified.
    \end{proof}

    To analyse the difference $|\widetilde{Z}_{t_k(t)}-\widetilde{Z}_t|$ for $t_k\leq t<t_k+h_k$, we need some famous results \cite[Section 6.5]{Folland} on the interpolation between Lebesgue function spaces. For any $1<p<r<q$ and $f\in L^p[0,T]\cap L^q[0,T]$, we have $f\in L^r$ with $\normx{f}_{L^r}^{1/p-1/q}\leq \normx{f}_{L^p}^{1/r-1/q}\normx{f}_{L^q}^{1/r-1/q}$. On the other hand, we can express the supremum Brownian motion via Gamma functions \cite[Corollary 2.2]{Boukai} as $\mathbb{E}[\sup_{0\leq s\leq t}\abs{B_s}^{2k}]=\pi^{-1/2}(2t)^{k}\Gamma(k+1/2)$ for all $t\in\mathbb{R}_+$ and $k\geq1$.

    \begin{proposition}\label{prop: estimate A4}
        \normalfont
        If $\abs{\mathcal{D}_N}\leq N^{-1}$, we can find a decreasing sequence $\Tilde{\psi}^{(4)}_N\to0+$ as $N\to\infty$ so that
        \[
            \mathbb{E} \bigg[ \int_0^T \big| \widetilde{Z}_{t_k(t)}(iy_N) -\widetilde{Z}_t(iy_N)\big|^p\,dt \bigg] \leq \Tilde{\psi}^{(4)}_N,\qquad\forall~N\geq1.
        \]
    \end{proposition}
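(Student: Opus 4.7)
The plan is to bootstrap off the pathwise estimate established in the proof of Proposition \ref{prop: estimate event B4}, which gives the unconditional bound $|\widetilde{Z}_t(iy_N)-\widetilde{Z}_{t_k(t)}(iy_N)|\leq |\sqrt{\kappa}B_{t_k(t),t}|+2y_N^{-1}h_{k(t)}$ valid for every $t\in[0,T]$ and every $\omega\in\Omega$. This is the natural starting point because it separates the increment into a martingale piece controlled by Brownian fluctuations and a purely deterministic piece controlled by the mesh size together with the lower bound on $\Im\widetilde{Z}_{t_k}(iy_N)\geq y_N$. Since no event conditioning is needed, the proposition can be stated without an auxiliary $\mathcal{A}_N^{(4)}$.

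First I would raise this bound to the $p$th power via the elementary inequality $(a+b)^p\leq 2^{p-1}(a^p+b^p)$ for $a,b\geq0$, integrate over $t\in[0,T]$, and take expectation, yielding
\begin{equation*}
    \mathbb{E}\bigg[\int_0^T \big|\widetilde{Z}_t-\widetilde{Z}_{t_k(t)}\big|^p\,dt\bigg] \leq 2^{p-1}\kappa^{p/2}\sum_{k<N}\int_{t_k}^{t_{k+1}}\mathbb{E}\big[\big|B_{t_k,t}\big|^p\big]\,dt + 2^{2p-1}y_N^{-p}\sum_{k<N}h_k^{p+1}.
\end{equation*}
The second (deterministic) term is immediate: since $y_N=N^{-1/2}$ and $\sum_{k<N}h_k^{p+1}\leq T\abs{\mathcal{D}_N}^p\leq TN^{-p}$, it contributes $O(N^{p/2}\cdot N^{-p})=O(N^{-p/2})$.

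The main obstacle is the Brownian term, since the Boukai formula cited in the preamble, $\mathbb{E}[\sup_{s\leq t}|B_s|^{2k}]=\pi^{-1/2}(2t)^k\Gamma(k+1/2)$, is stated only for the even integer exponent $2k$. For general $p\geq 2$ I would interpolate: choose the unique integer $k\geq 1$ with $2k\leq p\leq 2k+2$ and apply the Lebesgue interpolation inequality from \cite[Section 6.5]{Folland} to the random variable $|B_{t_k,t}|$, viewed as a function on $(\Omega,\mathbb{P})$, to get $\mathbb{E}[|B_{t_k,t}|^p]\leq C_p(t-t_k)^{p/2}$ for a constant $C_p$ depending only on $p$. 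Integrating in time on each mesh interval gives $\int_{t_k}^{t_{k+1}}(t-t_k)^{p/2}\,dt= h_k^{p/2+1}/(p/2+1)$, and summing yields a further contribution of order $T\abs{\mathcal{D}_N}^{p/2}\leq TN^{-p/2}$.

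Combining the two estimates produces $\tilde\psi_N^{(4)}\coloneqq C_{p,\kappa,T}N^{-p/2}$ for a suitable constant, which is manifestly decreasing in $N$ and tends to $0+$. No event restriction appears because the initial bound is pathwise and the Brownian moment estimates hold unconditionally; the only genuine care needed in the write-up is the choice of $k$ making the interpolation work uniformly in $N$, which is why I would state the argument for arbitrary real $p\geq 2$ rather than silently restrict to even integers.
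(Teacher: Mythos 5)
Your proposal follows essentially the same route as the paper's proof: both start from the pathwise inequality $|\widetilde{Z}_t-\widetilde{Z}_{t_k(t)}|\leq|\sqrt{\kappa}B_{t_k(t),t}|+2y_N^{-1}h_{k(t)}$ established inside Proposition \ref{prop: estimate event B4}, split via $(a+b)^p\leq 2^{p-1}(a^p+b^p)$, handle the fractional exponent $p\geq2$ by Lebesgue interpolation between even Gaussian moments (the same \cite[Section~6.5]{Folland} device), and reach a decreasing bound of order $N^{-p/2}$ with $y_N=N^{-1/2}$ and $|\mathcal{D}_N|\leq N^{-1}$. The only cosmetic difference is that you integrate the $p$-th moment of the increment in time on each mesh cell, whereas the paper passes to the sup-norm and bounds the time integral by $T$ times that sup; both yield the same rate, and your observation that no auxiliary high-probability event is needed matches the paper's statement.
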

    \begin{proof}
        From Proposition \ref{prop: estimate event B4} we know $|\widetilde{Z}_{t_k}-\widetilde{Z}_t|^p\leq 2|\sqrt{\kappa}B_{t_k,t}|^p+2^{p+1}N^{-p/2}$, $\mathbb{P}$-a.s. Since $p\geq2$, there exists $\ell\geq1$ such that $2\ell\leq p<2(\ell+1)$. Notice also that $t-t_k\leq h_k\leq 1/N$ for each $0\leq k<N$. Then,
        \[
            \mathbb{E}\big[ \sup_{k<N} \normx{\sqrt{\kappa}B_{t_k+\vdot}}_{h_k}^{2\ell} \big] \leq \pi^{-1/2}(2\kappa/N)^{\ell}\Gamma(\ell+1/2),\qquad\forall~\ell\geq1.
        \]
        Hence in light of the Lebesgue interpolation relation, we get
        \begin{equation*}\begin{aligned}
            &\mathbb{E}\big[ \sup_{k<N} \normx{\sqrt{\kappa}B_{t_k+\vdot}}^p_{h_k} \big] \leq \mathbb{E}\big[ \sup_{k<N} \normx{\sqrt{\kappa}B_{t_k+\vdot}}^{2\ell}_{h_k} \big]^{\ell+1-p/2} \mathbb{E}\big[ \sup_{k<N} \normx{\sqrt{\kappa}B_{t_k+\vdot}}^{2(\ell+1)}_{h_k} \big]^{\ell(\ell+1-p/2)/(\ell+1)}\\ 
            &\qquad\leq CN^{-\ell(\ell+1-p/2)}\coloneqq\Tilde{\psi}^{(4)}_N\to0.
        \end{aligned}\end{equation*}
        And the assertion then follows.
    \end{proof}

    \begin{proof}[Proof of Theorem \ref{thm: Lp-norm convergence}.]
        Combining Propositions \ref{prop: estimate A1}, \ref{prop: estimate A2}, \ref{prop: estimate A3}, \ref{prop: estimate A4} and on the event $\mathcal{A}_N\coloneqq \mathcal{A}^{(1)}_N\cap\mathcal{A}^{(2)}_N\cap\mathcal{A}^{(3)}_N$ with $\psi_1(N)\coloneqq (\Tilde{\psi}^{(1)}_N)^{1/p} + (\Tilde{\psi}^{(2)}_N)^{1/p} + (\Tilde{\psi}^{(3)}_N)^{1/p} + (\Tilde{\psi}^{(4)}_N)^{1/p}$, given $\abs{\mathcal{D}_N}\leq(4N+1)^{-3}$ we have
        \begin{equation*}\begin{aligned}                        
        &\mathbb{E} \big[\int_0^T |\eta_t-\widetilde{Z}_t(iy_N)|^p\,dt\vdot I_{\mathcal{A}_N}\big]^{1/p} \leq \mathbb{E}\big[ \int_0^T |\eta_t -Z_t(iy_N)|^{p}\,dt\vdot I_{\mathcal{A}_N^{(1)}}\big]^{1/p} + \mathbb{E}\big[ \int_0^T|Z_t(iy_N)-Z_{t_k}(iy_N)|^{p}\,dt \vdot I_{\mathcal{A}_N^{(2)}} \big]^{1/p}\\ 
        &\qquad+ \mathbb{E}\big[\int_0^T|Z_{t_k}(iy_N)-\widetilde{Z}_{t_k}(iy_N)|^p\,dt\vdot I_{\mathcal{A}_N^{(3)}}\big]^{1/p} + \mathbb{E}\big[\int_0^T|\widetilde{Z}_{t_k}(iy_N)-\widetilde{Z}_t(iy_N)|^p\,dt\big]^{1/p} \leq  \psi_1(N)^{1/p}\to0,
    \end{aligned}\end{equation*}
    where $1-\mathbb{P}(\mathcal{A}_N)\leq\psi_2(N)\coloneqq\Hat{\psi}^{(1)}_N+\Hat{\psi}^{(2)}_N+\Hat{\psi}^{(3)}_N\to0+$. And the assertion is then verified.
    \end{proof}

\subsection{Alternative power-law interpolation of driving forces.}\label{sec: linear interpolation proof}
    The simulation of SLE via splitting algorithm often requires a pathwise realization of the driving process $(\lambda_t)_{t\in\mathbb{R}_+}$. Practically one discretizes the time interval $[0,T]$ and performs interpolation to the traces of $(\lambda_t)_{t\in[0,T]}$. It is then theoretically necessary to show such interpolation is valid, which is the aim of this section.\par
    Inspired by \cite{Tran} where the square-root interpolation was shown effective, we generalize this method to arbitrary power-law interpolation of $(\lambda_t)_{t\in[0,T]}$, including the linear one where the Loewner flow can be computed explicitly \cite{Kager/Nienhuis/Kadanoff}. That said, we only briefly outline the proof and omit some technical details due to the overlap content with the square-root interpolation in \cite{Tran}.\par
    Given any $p>0$, to implement the $p$th power interpolation pathwise to the driving force $(\sqrt{\kappa}B_t)_{t\in[0,T]}$ in SLE at each $N$th step, we use
    \begin{equation}
        \lambda^{(N)}_{p,t}\coloneqq N^p(t-t_k)^p \sqrt{\kappa}(B_{t_k+h_k}-B_{t_k}) + \sqrt{\kappa}B_{t_k},\qquad\forall~t_k\leq t< t_k+h_k.
    \end{equation}
    Applying this $(\lambda^{(N)}_{p,t})_{t\in[0,T]}$ to the forward Loewner equation (\ref{eqn: g_t, forward Loewner equation}), we produce a forward Loewner chain $(g^{(N)}_{p,t})_{t\in[0,T]}$ which is generated by the Loewner curve $\gamma_{p,N}:[0,T]\to\overline{\mathbb{H}}$. As $N$ advances, we get the following convergence assertion.

    \begin{proposition}
        \normalfont
        Let $\gamma[0,T]$ be the forward Loewner curve for $\kappa\neq8$. If the mesh size $\abs{\mathcal{D}_N}\to0+$ with $N\to\infty$ at a rate $\abs{\mathcal{D}_N}=o(N^{-3})$, then
        \[
            \mathbb{P}\big( \normy{\gamma-\gamma_{p,N}}_T\leq\phi_1(N) \big)\geq1-\phi_2(N),\qquad\forall~N\geq1,
        \]
        for some monotonically decreasing $\phi_i:\mathbb{N}_+\to\mathbb{R}_+$ with $\phi_i\to0+$ for each $i=1,2$.
    \end{proposition}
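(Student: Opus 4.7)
The plan is to follow the blueprint of Theorem \ref{thm: sup-norm convergence}. Writing $t_k:=t_k(t)$, one decomposes
\[
    |\gamma_t - \gamma_{p,N,t}| \leq |\gamma_t - \gamma_{t_k}| + |\gamma_{t_k} - \gamma_{p,N,t_k}| + |\gamma_{p,N,t_k} - \gamma_{p,N,t}|,
\]
and controls each of the three terms uniformly in $t\in[0,T]$ on a high-probability good event. The first term is handled exactly as in Proposition \ref{prop: estimate event B1}: on the intersection of the Brownian-oscillation event with the derivative event (\ref{eqn: event 2 for B1}), one obtains $|\gamma_t-\gamma_{t_k}|\leq 2\phi(N^{1/2})(1-\beta)^{-1}N^{-(1-\beta)/2}$ up to a subpower factor.

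For the second term, observe that $\lambda^{(N)}_p$ agrees with $\sqrt{\kappa}B$ at every mesh point, and that $0\leq N^p(t-t_k)^p\leq 1$ on $[t_k,t_k+h_k)$ under the uniform mesh, so
\[
    \normy{\lambda^{(N)}_p-\sqrt{\kappa}B}_T \leq 2\sup_{k<N}\normx{\sqrt{\kappa}B_{t_k+\vdot}}_{h_k}.
\]
The reflection-principle tail bound used in Proposition \ref{prop: estimate event B3} forces this supremum below $(4N+1)^{-1}$ with probability $1-O(N^3 e^{-cN})$ under $\abs{\mathcal{D}_N}=o(N^{-3})$. Passing to the forward flow, the Gr\"onwall estimate of Proposition \ref{prop: estimate event B3} combined with the inverse-derivative bound (\ref{eqn: event 2 for B1}) evaluated at height $y_N=N^{-1/2}$ to read off the tip gives $|\gamma_{t_k}-\gamma_{p,N,t_k}|\lesssim N^{1/2}\cdot\normy{\lambda^{(N)}_p-\sqrt{\kappa}B}_T=o(1)$ uniformly in $k$.

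For the third term, on each interval $[t_k,t_k+h_k]$ the interpolated driver is a deterministic $p$-th power curve of total oscillation $|\sqrt{\kappa}(B_{t_k+h_k}-B_{t_k})|$, so the imaginary-part monotonicity argument of Proposition \ref{prop: estimate event B4}, combined with the Gaussian tail bound on $\sup_{k<N}\normx{\sqrt{\kappa}B_{t_k+\vdot}}_{h_k}$, yields $|\gamma_{p,N,t}-\gamma_{p,N,t_k}|\leq CN^{-1/4}$ uniformly, with probability $1-Ce^{-cN^{1/2}}$. Summing the three bounds and intersecting the good events produces the desired $\phi_1(N),\phi_2(N)\to0+$.

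The main obstacle is the second step. Unlike Proposition \ref{prop: estimate event B3}, which compared reverse flows starting from an interior point $iy_N$, here we must compare two forward flows at the tip of the trace, where conformal inversion is singular. Controlling this singularity forces us to use the inverse-derivative estimate (\ref{eqn: event 2 for B1}), and it is precisely the $N^{1/2}$-type amplification produced by this estimate that dictates the mesh requirement $\abs{\mathcal{D}_N}=o(N^{-3})$; any weaker bound on $|\partial_z\Tilde{g}_t^{-1}|$ would demand a finer discretization. The only place the exponent $p$ enters is through the elementary inequality $0\leq N^p(t-t_k)^p\leq 1$, so the final conclusion is uniform in $p>0$.
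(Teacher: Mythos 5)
Your decomposition
\[
    |\gamma_t - \gamma_{p,N,t}| \leq |\gamma_t - \gamma_{t_k}| + |\gamma_{t_k} - \gamma_{p,N,t_k}| + |\gamma_{p,N,t_k} - \gamma_{p,N,t}|
\]
is genuinely different from the one the paper sketches. The paper follows Tran closely: it never compares the two traces at the same time. Instead it lifts to the uniformizing maps at the mesh time $t_k$, picks $w=\gamma^{t_k}_{p,N}(r)$ and $z$ as the point of maximal imaginary part along the continuation, and splits
\[
    | \gamma(r+t_k) - \gamma_{p,N}(r+t_k) | \leq | \gamma(r+t_k) - \gamma(s+t_k) | + | \Hat{f}_{t_k}(z) - \Hat{f}_{t_k}(w) | + | \Hat{f}_{t_k}(w) - \Hat{f}^{(N)}_{p,t_k}(w) |,
\]
controlling the middle term by the distortion estimates for conformal maps in the box $A_{N,\theta,\phi}$ (Tran's Lemma 2.6, Pommerenke's Corollary 1.5), and the last term by the reverse-flow perturbation lemma of Viklund--Rohde--Wong. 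The argmax choice of $z$ is precisely what tames the tip singularity, because it guarantees $\Im z\gtrsim N^{-1/2}$ so the derivative bound from (\ref{eqn: event 2 for B1}) applies.

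This matters for your middle term. Your asserted bound $|\gamma_{t_k}-\gamma_{p,N,t_k}|\lesssim N^{1/2}\|\lambda^{(N)}_p-\sqrt{\kappa}B\|_T$ is not a direct corollary of Proposition \ref{prop: estimate event B3}, because that Gr\"onwall estimate compares two Loewner maps at an interior point $iy_N$, not the two tips. To make it apply here you would have to insert a further three-way split through a point at height $\sim N^{-1/2}$ in $\mathbb{H}$: tip $\to$ interior point (derivative estimate), interior points of the two flows (Gr\"onwall/VRW), interior point $\to$ tip again. But that further split is, term for term, the paper's second and third terms. In other words, your decomposition does not avoid the tip singularity, it merely pushes it into the middle term unresolved, and the only way to resolve it reproduces the paper's Tran-style argument. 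You should either carry that sub-decomposition out explicitly or adopt the paper's decomposition from the start.

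Your third term has a separate, more concrete gap. You invoke the imaginary-part monotonicity argument of Proposition \ref{prop: estimate event B4}, but that argument concerns the splitting output $\widetilde{Z}$, whose explicit piecewise-ODE form is what makes the estimate work. The object here, $\gamma_{p,N}$, is the forward Loewner \emph{trace} of the interpolated driving function, and there is no such explicit form. The correct ingredient is a standard hull-diameter bound for the Loewner chain over one mesh interval, of the form $\operatorname{diam}\big(K^{(N)}_{p,t}\setminus K^{(N)}_{p,t_k}\big)\lesssim\sqrt{h_k}+\operatorname{osc}\big(\lambda^{(N)}_{p},[t_k,t]\big)$, combined with the Gaussian tail on the Brownian increments. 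The numerical conclusion $O(N^{-1/4})$ is plausible, but the cited mechanism is not the one that delivers it; and for $p<1/2$ the interpolated driver has unbounded local $1/2$-H\"older norm near each $t_k$, so one must additionally justify that $\gamma_{p,N}$ is a curve at all on $[t_k,t_{k+1}]$ — an existence issue your outline does not acknowledge and which the paper's choice to evaluate at shifted times $r+t_k$ with $r\geq 1/N$ is partly designed to sidestep.
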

    We do not aim to give a detailed proof here, but a shortened outline will be presented below. Let $f_t$ be the inverse of $g_t$ and similarly $f^{(N)}_{p,t}$ of $g^{(N)}_{p,t}$ on $\mathbb{H}$, and let $\Hat{f}^{(N)}_{p,t}\coloneqq f^{(N)}_{p,t}(\vdot+\lambda^{(N)}_{p,t})$ be its shifting, and $\Hat{f}_t(\vdot)$ similarly defined. Write $G^{(N)}_{p,k}\coloneqq(\Hat{f}^{(N)}_{p,t_k})^{-1}\circ \Hat{f}^{(N)}_{p,t_k+h_k}$, then it is clear that $\Hat{f}^{(N)}_{p,t_k}=G^{(N)}_{p,0}\circ G^{(N)}_{p,1}\circ\cdots\circ G^{(N)}_{p,k-1}$ for each $k\leq N$. We also denote $\gamma^s_{p,N}:t\in[0,T-s]\mapsto g^{(N)}_{p,s}(\gamma_{p,N}(t+s))$ for all $0\leq t\leq T-s$. Hence for any $k<N$,
    \begin{equation}\label{eqn: 3.9}
        \big| \gamma(r+t_k) - \gamma_{p,N}(r+t_k) \big| \leq \big| \gamma(r+t_k) - \gamma(s+t_k) \big| + \big| \Hat{f}_{t_k}(z) - \Hat{f}_{t_k}(w) \big| + \big| \Hat{f}_{t_k}(w) - \Hat{f}^{(N)}_{p,t_k}(w) \big|,
    \end{equation}
    where $1/N\leq r\leq 2/N$, $w=\gamma^{(N)}_{p,t_k}(r)$ and $z=\arg\max\{\Im \gamma^{(N)}_{p,t_k}(s):\,0\leq s\leq 2/N\}$. The first term on the right-hand side is bounded by the uniform continuity of $\gamma[0,T]$ on the event $\mathcal{B}^{(1)}_N$ by (\ref{eqn: event 1 for B1}).\par
    Now we introduce, for each subpower sequence $(\phi_N)_{N\geq1}$, the box
    \[
        A_{N,\theta,\phi}\coloneqq\big\{ z\in\mathbb{H}:\, \abs{\Re z}\leq N^{-1/2}\phi_N,\, \phi_N^{-1}\leq N^{1/2}\Im z\leq\theta \big\}\subseteq\mathbb{H},\qquad\forall~\theta>0.
    \]
    Invoking \cite[Lemma 2.6]{Tran}, for any $z,z^\prime\in A_{N,\theta,\phi}$ and conformal map $f$ on $\mathbb{H}$, we have $\abs{\partial_z f(z)}\leq C\phi_N^c\abs{\partial_z f(\Im z)}$ and $d_{\mathbb{H},\text{hyp}}(z,z^\prime)\leq C(\log\phi_N+1)$ for constants $c,C>0$, where $d_{\mathbb{H},\text{hyp}}(\vdot)$ denotes the hyperbolic distance \cite{Anderson} on $\mathbb{H}$. In light of \cite[Corollary 1.5]{Pommerenke}, we further know $\abs{f(z)-f(z^\prime)}\leq 2\abs{\partial_z f(z)\Im z}\exp(4d_{\mathbb{H},\text{hyp}}(z,z^\prime))$ for any conformal map $f$ on $\mathbb{H}$ and $z,z^\prime\in\mathbb{H}$. The second term on the right-hand side of (\ref{eqn: 3.9}) can then be estimated if we suitably adjust the above arguments in the context of SLE, see also \cite[Section 3]{Tran}.\par
    In this regard, we also invoke \cite[Lemma 2.2]{Viklund/Rohde/Wong} which provides a perturbation estimate to the distance between two reversed Loewner flows provided we know how to compare their respective driving functions. And this yields the desired estimate to the third term on the right-hand side of (\ref{eqn: 3.9}), verifying the main proposition of this section. In practical running of the splitting algorithm, we combine with the $p$th power interpolation pathwise on the driving traces to ease the numerical computation.

\section{Future perspectives and conclusional remarks}\label{sec: Conclusion remarks}
    \begin{figure}[t!]
    \centering
    \begin{subfigure}[t]{0.45\textwidth}
        \centering
        \includegraphics[width=\textwidth]{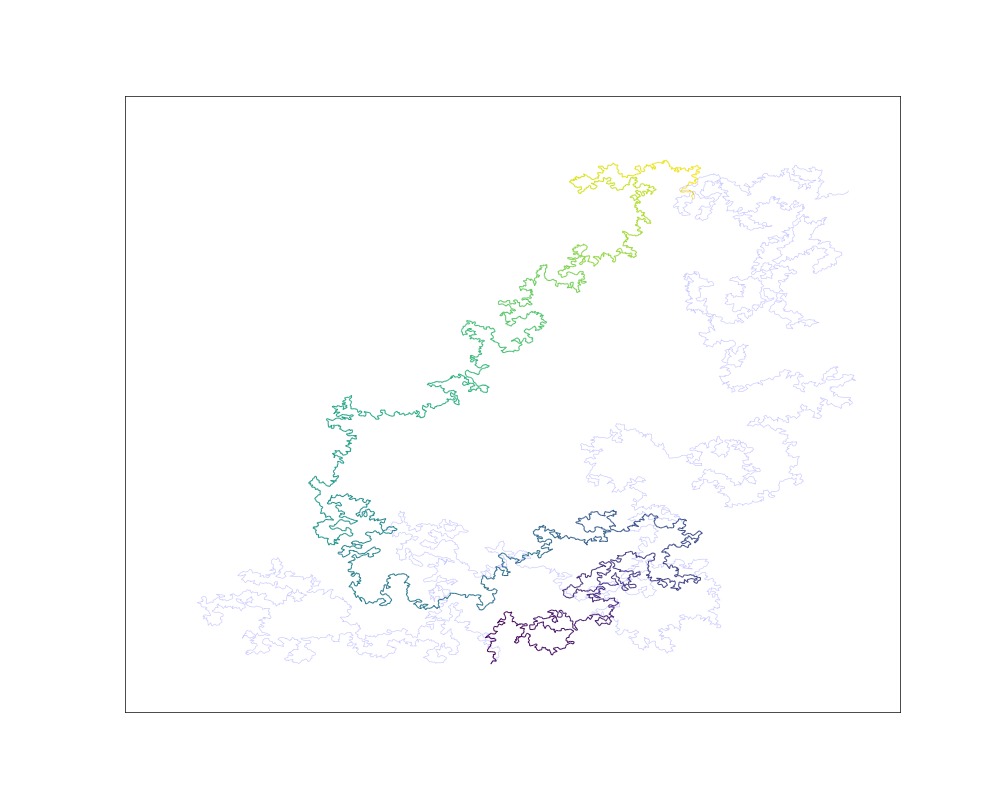} 
        \vspace{-30pt}
        \caption{$p=0.1$}
    \end{subfigure}
    \hspace{0.015\textwidth}
    \begin{subfigure}[t]{0.45\textwidth}
        \centering
        \includegraphics[width=\textwidth]{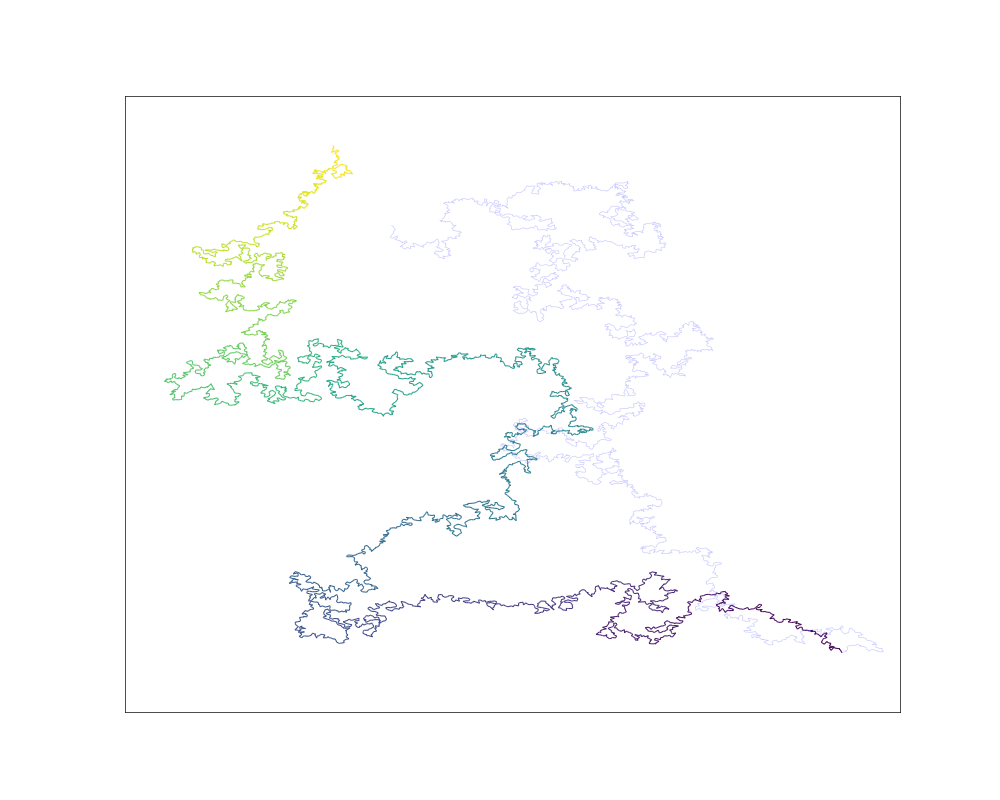} 
        \vspace{-30pt}
        \caption{$p=0.2$}
    \end{subfigure}
    \begin{subfigure}[t]{0.45\textwidth}
        \centering
        \includegraphics[width=\textwidth]{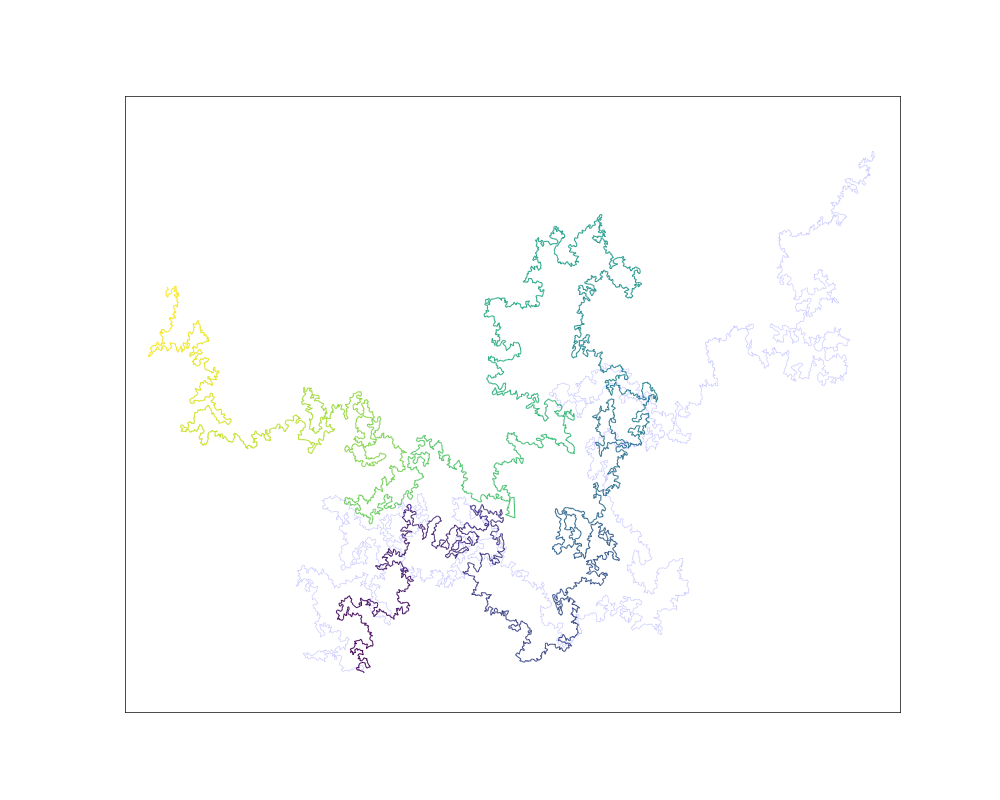} 
        \vspace{-30pt}
        \caption{$p=0.3$}
    \end{subfigure}
    \hspace{0.015\textwidth}
    \begin{subfigure}[t]{0.45\textwidth}
        \centering
        \includegraphics[width=\textwidth]{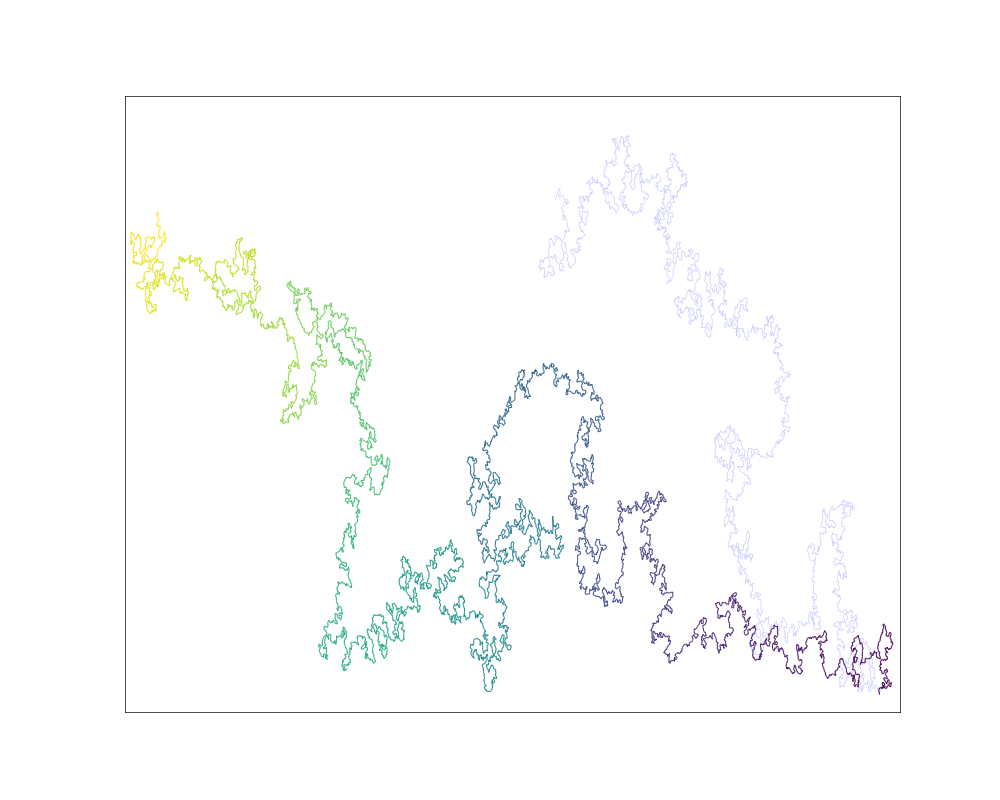} 
        \vspace{-30pt}
        \caption{$p=0.4$}
    \end{subfigure}
    \vspace{9pt}
    \caption{Noise-reinforced SLE(4) with varying reinforcement intensity.}
    \label{fig: noise-reinforced SLE drawings}
\end{figure}

    The aim of this final section is twofold: First, we draw the noise-reinforced SLE and the fractional SLE in Section \ref{sec: Non-standard Loewner chains driven by semimartingales} via the splitting algorithm, before looking at their fractal dimensions; Second, the ingredients for showing the rate of algorithm convergence are briefly outlined which sheds light on a more quantitative analysis of the splitting algorithm.

\subsection{Splitting drawings of noise-reinforced and fractional Loewner curves.} 
    Arising from studies of two-color urn models \cite{Bai/Hu/Zhang} and step-reinforced random walk \cite{Chen/Laulin}, the noise-reinforced Brownian motion $(B^p_t)_{t\in\mathbb{R}_+}$ is characterized by an integral of a reinforced version of white noise. Its time-derivative $dB^p/dt$, viewed in the sense of generalized functions \cite[Chapter 3]{Evans}, has the tendency to repeat itself infinitesimally over time. When the Loewner flow $(Z_t)_{t\in[0,T]}$ is driven by such reinforced forces, one naturally wonders whether its generated Loewner curve inherits such microscopic property. A classical and important subsequent question is then to understand in what sense the local repetition over small time duration disrupts the macroscopic asymptotic SLE behaviors.\par
    To our best knowledge, none of those questions are rigorously answered so far. And as communicated with Nina Holden, such noise-reinforced version of SLE is likely to possess very different regularities compared to the standard Loewner chains. One instance can be found in Figure \ref{fig: noise-reinforced SLE drawings} where the noise-reinforced SLE(4) is less likely to be self-similar but more likely to replicate its previous distortions on time interval $[0,T]$. Nevertheless, its theoretical derivations can be contrastedly less transparent. The Cardy--Smirnov formula of crossing probability for regular SLE \cite{Cardy,Langlands/Pouliot/Saint-Aubin}, e.g.~relies crucially on passing to limits with martingale structure, whereas the semimartingale decomposition of $(B^p_t)_{t\in\mathbb{R}_+}$ induces a non-trivial drift term, forcing us to conceive novel theoretical techniques to resolve this issue.\par
    Very fortunately, effective numerical methods allow us to perceive, at least with high probability, what significant phenomena is going to emerge. For instance, observed from Figure \ref{fig: noise-reinforced SLE drawings} the noise-reinforced Loewner curve is fractal and impossible to be self-similar, which saves great effort if we want to understand its geometry in this aspect. Meanwhile, with gradually intensified memory, i.e.~increasing $p$-value, the noise-reinforced traces look accordingly more twisted, forecasting a possible multiscale phase transition \cite{Aharony,Boettcher} with threshold parameters both $p$ and $\kappa$. Other interesting statistical measures such as the fractal dimension or the winding angle statistics \cite{Najafi/Tizdast/Cheraghalizadeh} can also be numerically quantified based on realizations of the splitting algorithm. In the following, we draw the fractional SLE, which have been proposed in \cite{Tizdast/Ebadi/Cheraghalizadeh/Najafi/Andrade/Herrmann}, and analyse its fractal dimensions.\par
\begin{figure}[t!]
    \centering
    \begin{subfigure}[t]{0.45\textwidth}
        \centering
        \includegraphics[width=\textwidth]{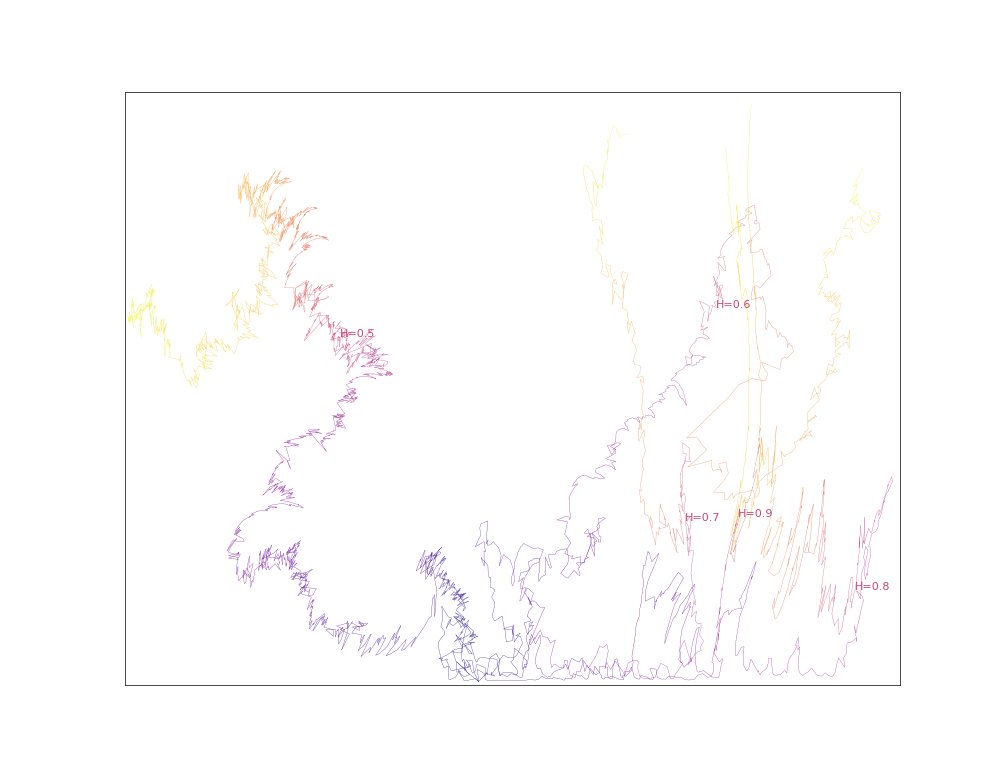} 
        \vspace{-30pt}
        \caption{$\kappa=3$}
    \end{subfigure}
    \hspace{0.015\textwidth}
    \begin{subfigure}[t]{0.45\textwidth}
        \centering
        \includegraphics[width=\textwidth]{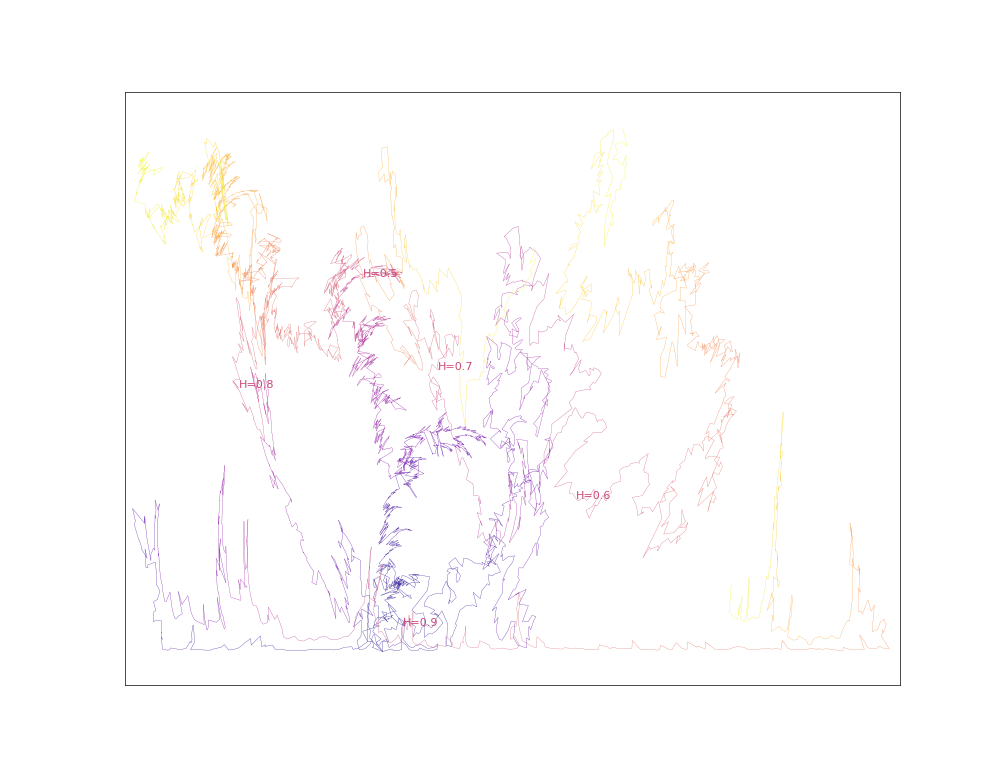} 
        \vspace{-30pt}
        \caption{$\kappa=4$}
    \end{subfigure}
    \begin{subfigure}[t]{0.45\textwidth}
        \centering
        \includegraphics[width=\textwidth]{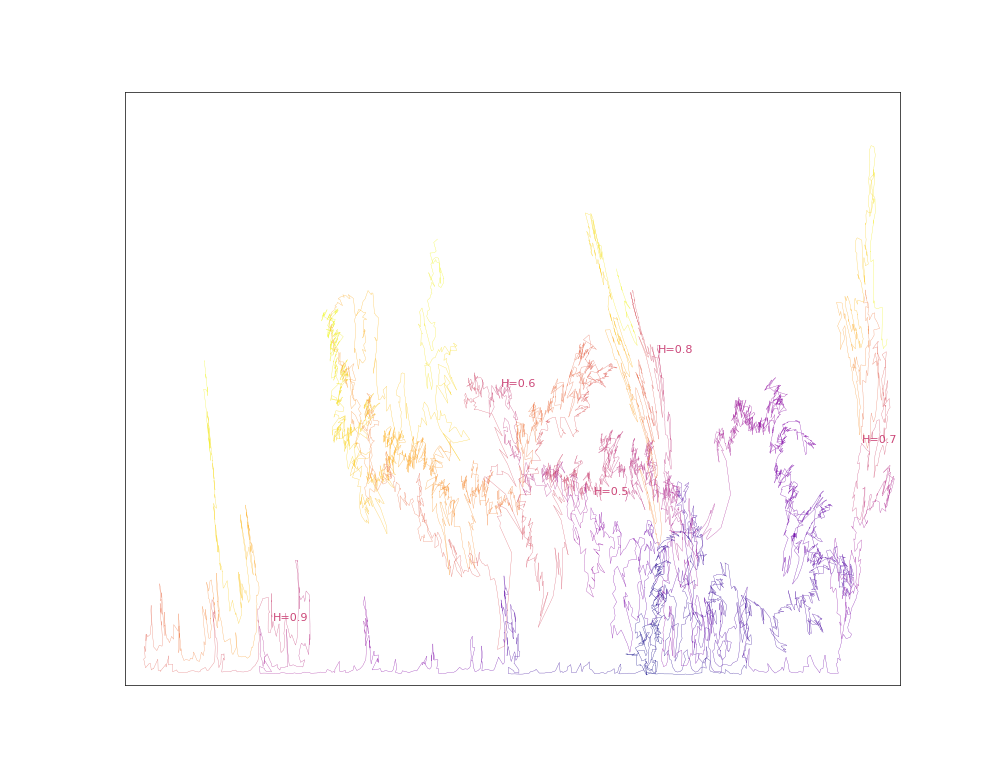} 
        \vspace{-30pt}
        \caption{$\kappa=5$}
    \end{subfigure}
    \hspace{0.015\textwidth}
    \begin{subfigure}[t]{0.45\textwidth}
        \centering
        \includegraphics[width=\textwidth]{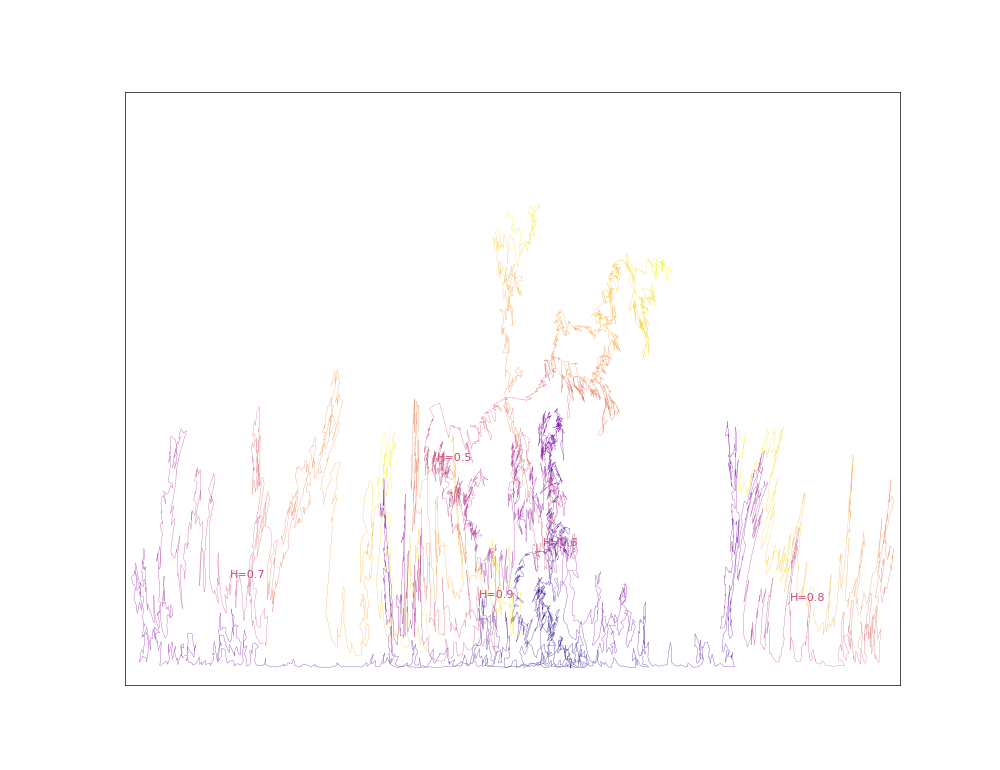} 
        \vspace{-30pt}
        \caption{$\kappa=6$}
    \end{subfigure}
    \vspace{9pt}
    \caption{Fractional SLE$(\kappa)$ with varying diffusivity $\kappa$ and Hurst exponent $H$. Here the Hurst exponent is indicated at the midpoint of each trace.} 
    \label{fig: Fractional SLE}
\end{figure}
    Motivated from the Loewner evolution with subordinated and random time-change \cite{Ghasemi Nezhadhaghighi/Rajabpour/Rouhani}, the study of fractional SLE was initiated by \cite{Tizdast/Ebadi/Cheraghalizadeh/Najafi/Andrade/Herrmann} where their striking finding reveals that suitably modifying the deterministic Loewner force \cite[Eqn. (11)]{Tizdast/Ebadi/Cheraghalizadeh/Najafi/Andrade/Herrmann} should at least numerically yield self-similar traces with the underlying driving force $(B^H_t)_{t\in\mathbb{R}_+}$, whereas the singularity at$r\to0+$ is not of central importance. Although there is no more conformal invariance whenever $H\neq1/2$, the observations in \cite[Section IV]{Tizdast/Ebadi/Cheraghalizadeh/Najafi/Andrade/Herrmann} still shed light on the possibility to describe the interface of variant lattice models \cite{Boyle/Steinhardt}. It is still open, nonetheless, to rigorously attest such microlocal properties of the fractional SLE, despite that numerical simulations should have promised such formulations to be natural.\par
    Taken into account the existing computational uncertainties, we run the simulation of fractional SLE via the alternative splitting algorithm for different $\kappa$ and $H$ values. This algorithm has been shown effective under both the sup-norm and $L^p$ topologies in Section \ref{sec: convergence of splitting algorithm to SLE}, and whose principle is different from the frequently used zipper algorithm \cite{Castro/Lukovic/Pompanin/Andrade/Herrmann}. Not quite surprisingly, we observe exactly like in \cite[Section IV]{Tizdast/Ebadi/Cheraghalizadeh/Najafi/Andrade/Herrmann} that the fractional Loewner curves gets smoother as $H$ increases and coarser when $\kappa$ grows, leading to varying fractal dimensions. This observation can be traced in Figure \ref{fig: Fractional SLE} where the fractal Loewner curves are drawn via the splitting algorithm for various Hurst exponents.\par
    The fractal dimension of random objects is of considerable theoretical importance in that it governs the fashion in which delicate information is propagated as time flows \cite{Herrmann/Stanley}. When the dynamics is restrained by the Hurst exponent $H$ to follow a power-law decay on correlations, various conjectures \cite{Schmittbuhl/Vilotte/Roux,Weinrib/Halperin} and with Fourier filtering \cite{Oliveira/Schrenk/Araujo/Herrmann/Andrade} have proposed explicit relations between $H$ and the fractal dimension. In this work, we use the classical box-counting method to compute such fractional dimension $D_f(\kappa,H)$ of the fractional SLE via splitting. The results in Figure \ref{fig: fSLE fractal dimension} conform to the pioneering observation in \cite[Section IV]{Tizdast/Ebadi/Cheraghalizadeh/Najafi/Andrade/Herrmann} regarding the monotonicity of $D_f(\kappa,H)$ w.r.t. $\kappa$ and $H$. Even so, we have to concede that the combination of box-counting and splitting algorithm does not come without defect. To the aim of computing efficiency, we have adopted the pathwise linear interpolation of fractional driving forces - whose convergence is guaranteed in Section \ref{sec: linear interpolation proof} - leading to small errors since microscopic twistings are neglected. The diagram of $D_f(\kappa,H)$ w.r.t. $\kappa$ and $H$, see Figure \ref{fig: fSLE fractal dimension}, is then non-negligibly above $1$ when $H=1$, differing to \cite[Section IV]{Tizdast/Ebadi/Cheraghalizadeh/Najafi/Andrade/Herrmann}. We thus look forward to upgrade and optimization of the numerical method of calculating $D_f(\kappa,H)$.

\begin{figure}[t!]
    \centering
    \includegraphics[width=0.6\textwidth]{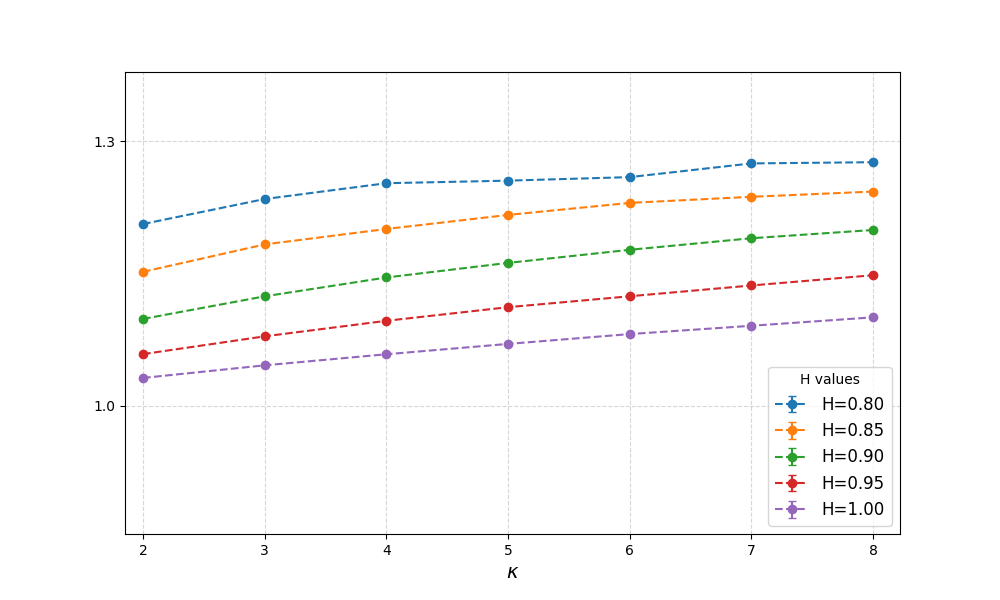} 
    \vspace{-1pt} 
    \caption{Fractal dimension $D_f(\kappa,H)$ of fractional SLE with varying diffusivity $\kappa$ and Hurst exponent $H$.}
    \label{fig: fSLE fractal dimension}
\end{figure}

\subsection{On the quantitative analysis of the velocity of algorithm convergence.} Observing from Figure \ref{fig: box-counting and yardstick} for an underlying drawing of SLE(4) via splitting algorithm, we see that it is a well suited method to simulate analytically tractable field such as SLE. Quoted from Terry Lyons et al. \cite{Foster/Lyons/Margarint}, this splitting algorithm is the first high order numerical method which preserves second and fourth moments of the reverse Loewner flow, prominently distinguishing itself from previous methods by Kennedy \cite{Kennedy} and Tran \cite{Tran}.\par
Hopefully, a precise local error estimate in Section \ref{sec: convergence of splitting algorithm to SLE} would yield the exact rate of convergence of our algorithm. Understanding this velocity is important for the resource allocation and cost control \cite{Dennis/Schnabel}, while a fast splitting algorithm significantly reduces CPU time and memory occupation. We therefore conclude this paper with some new insights on the rate of splitting algorithm convergence to SLE, illuminating the direction of future researches.\par
In the following we formulate the main ideas of giving the precise convergence rate of the splitting algorithm, respectively in the sup-norm and $L^p$ topology scenarios. In Proposition \ref{prop: estimate event B1} the $\normx{\vdot}_T$ norm between $\eta_t$ and $\eta_{t_k(t)}$ on $[0,T]$ is bounded by a subpower function $\phi(N^{1/2})$ with $N\geq1$. One should look to \cite{Lawler/Limic,Tran} for a precise form of $(\phi_N)_{N\geq1}$ and thus sharpen this assertion. The decreasing sequence $(\epsilon_N)_{N\geq1}$ which appeared in Proposition \ref{prop: estimate event B2} remains unexplicitly specified. Indeed, one notices that $\epsilon_N=\mathbb{P}( y_N^{\delta-1} \normx{ Z(iy_N)-\eta }_T\geq N^{(1-\delta)/4} )$ which is characterized purely by the reverse Loewner flow $(h_t)_{t\in[0,T]}$. Consulting the classic SLE literature \cite{Duplantier/Sheffield,Lawler/Sheffield,Werner} should then shed light on the exact order of decay of $(\epsilon_N)_{N\geq1}$.\par
The viewpoint of $L^p$-norm convergence can be treated analogously. To quantify the rate of convergence relies crucially on estimating the sequences $\Hat{\psi}^{(1)}_N=1-\mathbb{P}(\mathcal{A}^{(1)}_N)$ from Proposition \ref{prop: estimate A1} and $\Hat{\psi}^{(2)}_N=1-\mathbb{P}(\mathcal{A}^{(2)}_N)$ from Proposition \ref{prop: estimate A2}. If we take a closer look at those terms, then this is exactly the monotonic sequence $(\epsilon_N)_{N\geq1}$ whose precise value is to be determined. At this moment we should also point out that the presence of the high probability event $\mathcal{A}_N$ from Theorem \ref{thm: Lp-norm convergence} might be unnecessary, i.e.~it might be replaced with $\Omega$. We do not make any predictions here, but such modification definitely requires a finer inspection on the regularities of the reverse Loewner flow $(h_t)_{t\in[0,T]}$, which we shall look at in future projects.

\par\noindent
\textbf{Acknowledgements.} We would like to acknowledge James Foster at Univ.~Oxford for his help in sharing with us his simulation on the traces of SLE(8/3) and SLE(6) via the Ninomiya--Victoir splitting algorithm. We also acknowledge Lukas Schoug at Univ.~Cambridge for his valuable comments and for reading the preliminary version as well as the revised version of our manuscript. It should also not go unmentioned that the very constructive comments from the anonymous reviewers greatly enhance the quality of this work. We also thank Nina Holden for communications on the aspects of noise-reinforced SLE. Numerical drawings in Figure \ref{fig: Fractional SLE} is carried out at the Center of High Performance Computing at NYU Shanghai. Jiaming and Vlad acknowledge supports from the NYU--ECNU Mathematical Institute at NYU Shanghai.

\bibliographystyle{plain}
\begin{spacing}{1}

\end{spacing}

\end{document}